\documentclass[11pt,a4paper]{article}
\usepackage[top=2cm, bottom=2cm, left=2.5cm, right=2.5cm]{geometry}
\usepackage{amsmath}

\usepackage{mathrsfs}

\usepackage[francais, english]{babel}
\usepackage[utf8]{inputenc}
\usepackage[T1]{fontenc}
\usepackage{amsfonts, amssymb, amsthm}
\usepackage{dsfont}
\usepackage{thmtools}
\usepackage{mathtools}
\usepackage{amsmath}

\usepackage{multirow}
\usepackage{graphicx}
\usepackage{caption, subcaption}
\usepackage{float}
\usepackage{enumitem}

\usepackage{pict2e}

\usepackage{mdframed}

\usepackage[all]{xy}

\usepackage{tikz}
\usetikzlibrary{cd} %Diagrammes Commutatifs
\usetikzlibrary{decorations} %Perturbation Aléatoires et autres décorations
\usetikzlibrary{decorations.pathmorphing}
\usetikzlibrary{decorations.pathreplacing}
\usepackage{tkz-tab} %Tableaux de signes et variations

\usepackage{multicol} %Environnement Multicolones

\mathtoolsset{showonlyrefs} %Numerotation Equations ssi Citee 

\usepackage{relsize,exscale}

\usepackage{hyperref}
\usepackage{url}
\usepackage{xurl}

 {\begin{list}{}%
         {\setlength{\leftmargin}{#1}}%
         \item[]%
 }
 {\end{list}}

\allowdisplaybreaks %autorisation de casser les environements align pour changer de page

\usepackage{import}

\makeatletter%
\@ifclassloaded{book}%
  {

}%
  {}%
\makeatother%
\newcounter{TheoremA}

\newtheorem{ThA}[TheoremA]{Theorem}

\newtheorem*{Cor*}{Corollary}

\newtheorem*{Conj*}{Conjecture} %Conjecture non numérotée pour l'intro
\newtheorem*{Question*}{Question} %Conjecture non numérotée pour l'intro

\makeatletter%
\@ifclassloaded{book}%
  {%
  	\newtheorem{Th}{Theorem}[chapter] %Si book alors recommencer à chaque chapitre
  }{%
  	\newtheorem{Th}{Theorem}[section] %Si PAS book alors le compteur recommence à chaque section (la commande que j'avais avant)
  }%
\makeatother%
\newtheorem{Lemma}[Th]{Lemma} %Même compteur que Theorem
\newtheorem{Prop}[Th]{Proposition}

\theoremstyle{definition}
\newtheorem{Def}[Th]{Definition}

\theoremstyle{remark}
\newtheorem{Remark}[Th]{Remark}

\newcounter{fig}

\newcommand{\R}{\mathbb{R}}
\newcommand{\C}{\mathbb{C}}
\newcommand{\Z}{\mathbb{Z}}
\newcommand{\N}{\mathbb{N}}
\newcommand{\scal}[2]{\left\langle#1,#2\right\rangle}
\DeclareMathOperator{\Tr}{Tr}

\DeclareMathOperator{\Rea}{Re} %Partie Réelle Complexe

\newcommand{\abs}[1]{\left\vert #1 \right\vert}

\newcommand{\Hspace}{\mathcal{H}}
\newcommand{\cH}{\mathcal{H}}

\DeclareMathOperator{\PSH}{PSH}

\DeclareMathOperator{\FS}{FS}
\DeclareMathOperator{\Hilb}{Hilb}

\DeclareMathOperator{\Amp}{Amp}
\DeclareMathOperator{\Herm}{Herm}

\DeclareMathOperator{\GL}{GL}

\DeclareMathOperator{\U}{U}

\usepackage{listings}
\usepackage{numprint}

\usepackage{setspace}
\usepackage{calc}

\title{Quantizing Geodesics in Kähler and Sasaki Geometry}

\author{Gilles Courtois, Eleonora Di Nezza, Thomas Franzinetti}
\date{}
\begin{document}

\maketitle
\abstract{
The space of Kähler potentials can be quantized through the classical Fubini–Study map, relating infinite-dimensional geometric structures to finite-dimensional symmetric spaces. We prove (exactly) when the Fubini–Study image of a geodesic line in the space of positive definite Hermitian matrices gives rise to a quasi-geodesic in the space of Kähler potentials. Furthermore, we introduce a quantization procedure for geodesics between potentials on normal Kähler varieties and show how this construction extends to the Sasaki setting.}
\vspace{0.3cm}

\noindent
\textbf{2020 Mathematics Subject Classification:} 32Q15, 53C25, 58B20, 53D50, 54E40.

\section{Introduction}
Let $X$ be a compact K\"ahler manifold and $\omega$ be a Kähler form of complex dimension $n$. The space of Kähler potentials:
\begin{equation}
\mathcal{H}_\omega = \left\lbrace u\in\mathcal{C}^{\infty}(X, \R) \, \vert \, \omega_u := \omega + dd^c u > 0 \right\rbrace
\end{equation}
plays a central role in Kähler geometry. It is an infinite-dimensional manifold whose tangent space at each point can be identified with $C^\infty(X)$. At each $u\in \mathcal{H}_\omega$, the Mabuchi $L^2$-metric \cite{mabuchi_symplectic_1986} on the tangent space at $u$ is
\begin{equation}
\scal{\psi_1}{\psi_2}_u = \frac{1}{V }\int_X \psi_1 \psi_2 \,\omega_u^n \qquad \psi_1, \psi_2\in T_{u}\mathcal{H}_\omega \simeq \mathcal{C}^\infty(X),
\end{equation}
where $V:=\int_X \omega^n>0$.

%Here, $d^c := \frac{i}{2}(\overline{\partial} - \partial)$ so that $dd^c = i \partial \overline{\partial}$. 

 More generally, one may equip each tangent space with $L^p$-Finsler structures
\begin{equation}
\|\psi\|_u = \left (\frac{1}{V }\int_X |\psi|^p \omega_u^n\right)^{1/p} \qquad \psi \in T_{u}\mathcal{H}_\omega \simeq \mathcal{C}^\infty(X),
\end{equation}
leading to a family of distances $d_p$ ($p\geq 1$), and a well defined notion of geodesics between two elements of $\mathcal{H}_\omega$.  In fact, $d_p$ is defined as the length pseudometric associated to the Finsler $L^p$-norm, but one nontrivial point is that it is a genuine metric (i.e. separates points). Chen and his collaborators worked intensively in this direction \cite{chen_space_2000, calabi_space_2002, Chen_Tian_2008, Chen_Space_2009, Chen_Sun_2009} proving in particular that this infinite dimensional space is a path metric space (the metric is the infimum of the length of paths) with $\mathcal{C}^{1,\overline{1}}$ geodesics. 
Later Chu-Tosatti-Weinkove \cite{CTW18} has established that geodesics are $\mathcal{C}^{1,1}$-regular, which is known to be the optimal regularity thanks to examples of Darvas, Lempert, Vivas \cite{darvas2014morse, darvas_weak_2012, lempert2013geodesics}. In particular, geodesics in general leave the space $\cH_\omega$; we will refer to those as \emph{weak geodesics}.

As shown by Chen \cite{chen_space_2000} and \cite{Dar15} the distance $d_p$ between two smooth potentials $u_0,u_1$ is realized by the geodesic $u_t$:
\begin{equation}
	\label{eq: geodesic distance intro}
	d_p(u_0,u_1)^p= \int_X |\dot{u}_t|^p (\omega+dd^c u_t)^n, \; \forall t \in [0,1]. 
\end{equation}

The understanding of the geometry and topology of $\mathcal{H}_\omega$ is crucial for the study of special metrics in K\"ahler geometry (such as K\"ahler-Einstein, constant scalar curvature, extremal metrics...). A powerful method to study this infinite-dimensional space is the so called \emph{Kähler quantization}, which is a process to approximate the infinite dimensional space $\cH_\omega$ and its geometry by finite dimensional negatively curved manifolds. Such a process has been at the core of intense research over the past decades \cite{donaldson2001scalar, phong2005monge, song2010bergman, Chen_Sun_2009, berndtsson2013probability, boucksom2018variational, darvas2020quantization}.
%More precisely, in \cite{phong2005monge, berndtsson2013probability}, the authors establish a quantization process of geodesics between Kähler potentials.

%\subsection{About the Fubini-Study Map: $\FS_k$}

The "quantization process" requires the Kähler manifold to be projective. More precisely, we consider $(X,\omega)$ a compact Kähler manifold which carries a holomorphic line bundle $L\rightarrow X$ with a Hermitian metric whose curvature is $ \omega$. In this setting, $X$ comes with a family of embeddings into projective spaces:
$$
\Phi_k : X \hookrightarrow \mathbb{P}\left( H^0(X, L^k) \right),
$$
where $H^0(X,L^k)$ is the finite dimensional space of holomorphic sections of the $k$-th tensor power of $L$. By construction $\frac{1}{k}\Phi_k^*\omega_{FS}$ lies in the same cohomology class as ${\omega}$, hence $\frac{1}{k}\Phi_k^*\omega_{FS}=\omega+ dd^c u$, for some $u$ in $\cH_\omega$. Such smooth function is in the image of the Fubini-Study map
$$
\FS_k : \Herm^{++}(H^0(X,L^k)) \longrightarrow \cH_\omega.
$$
We refer to Section \ref{Sec:DefFSHilbk} for a definition. Here

$$\hat{\cH}(L^k) := \Herm^{++}(H^0(X,L^k)) = \GL(N_k,\C) / \U(N_k,\C)$$ is the symmetric Riemannian manifold which can be interpreted as the set of Hermitian scalar products on the (finite $N_k$-dimensional) space of holomorphic sections $H^0(X,L^k)$.
\medskip

%If no confusion may arise, let us write $\cH = \cH_\omega$, l
%[Corollary \ref{Cor:FSkInjective}]
%Let $d$ (usually denoted by $d_2$ in the literature) be the distance induced by the Mabuchi metric on $\cH_\omega$ and 
Let $\hat{d}$ be the distance on $\hat{\cH}(L^k)$.
Our first result states the following:
\begin{ThA}
\label{Th:INTROFSkLipschitz}
The map $\FS_k  : (\hat{\cH}(L^k) , \hat{d}) \rightarrow (\cH_\omega, d_p)$ is Lipschitz. More precisely,
$$
d_p(\FS_k(H_1), \FS_k(H_2)) \leq C_k \hat{d}(H_1, H_2), \quad \forall H_1, H_2 \in \hat{\cH}(L^k).$$
Here, $C_k$ is an explicit constant depending only on $k, X$, $\omega$ and $p$.
\end{ThA}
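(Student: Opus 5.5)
The plan is to use that $\hat{\cH}(L^k)$ is a complete Riemannian symmetric space, so any two points $H_1,H_2$ are joined by a unique geodesic. After choosing an $H_1$-orthonormal basis $(s_i)$ of $H^0(X,L^k)$ that simultaneously diagonalizes $H_2$ with eigenvalues $e^{\lambda_i}$, this geodesic is $H_t$, for which $(e^{-t\lambda_i/2}s_i)$ is $H_t$-orthonormal. Its length is $\hat d(H_1,H_2)=(\sum_i \lambda_i^2)^{1/2}$, with the normalization of the symmetric metric $\Tr(H^{-1}\dot H H^{-1}\dot H)$; if the paper's normalization differs, only $C_k$ changes. Since $d_p$ is the length metric of the $L^p$ Finsler structure, we get
\begin{equation}
d_p(\FS_k(H_1),\FS_k(H_2))\le \int_0^1 \|\tfrac{d}{dt}\FS_k(H_t)\|_{\FS_k(H_t)}\,dt .
\end{equation}
So it suffices to bound the Finsler norm of the differential of $\FS_k$ pointwise by $C_k$ times the norm of the tangent vector in $\hat{\cH}(L^k)$.

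Next we compute the differential. With the convention $\FS_k(H)=\frac1k\log\big(\sum_i |s_i^H|_{h^k}^2\big)$, where $(s_i^H)$ is an $H$-orthonormal basis, we have along the geodesic
\begin{equation}
\FS_k(H_t)=\frac1k\log\sum_i e^{-t\lambda_i}|s_i|^2_{h^k}.
\end{equation}
Differentiating gives
\begin{equation}
\tfrac{d}{dt}\FS_k(H_t)=-\frac1k\,\frac{\sum_i\lambda_i e^{-t\lambda_i}|s_i|^2}{\sum_i e^{-t\lambda_i}|s_i|^2}.
\end{equation}
At each point of $X$ this is a convex combination of the numbers $-\lambda_i/k$. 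Note that $\sum_i e^{-t\lambda_i}|s_i|^2>0$ everywhere because $L$ is very ample, which is implicit in $\Phi_k$ being an embedding. Hence we get the pointwise bound
\begin{equation}
\big|\tfrac{d}{dt}\FS_k(H_t)\big|\le \frac1k\max_i|\lambda_i|\le \frac1k\Big(\sum_i\lambda_i^2\Big)^{1/2}=\frac1k\,\hat d(H_1,H_2).
\end{equation}

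Now we integrate the pointwise bound. The measure $\omega_{u_t}^n$, with $u_t=\FS_k(H_t)\in\cH_\omega$, has total mass $V$ by cohomological invariance, so
\begin{equation}
\|\dot u_t\|_{u_t}=\Big(\frac1V\int_X|\dot u_t|^p\omega_{u_t}^n\Big)^{1/p}\le \frac1k\,\hat d(H_1,H_2).
\end{equation}
Integrating in $t\in[0,1]$ yields the claim with $C_k=1/k$, up to the normalization constant of $\hat d$. A more generous constant, for example one involving $\sqrt{N_k}$ if one uses a different norm, would come from the same argument. The dependence on $X,\omega,p$ announced in the statement presumably enters only through conventions; the argument above does not actually need it.

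The main obstacle is justifying that $d_p$ is bounded by the Finsler length of the smooth path $t\mapsto\FS_k(H_t)$. This holds because $d_p$ is by definition the infimum of such lengths over smooth paths in $\cH_\omega$, and our path is smooth in $(t,x)$ and stays in $\cH_\omega$, since each $\FS_k(H_t)$ is a pullback of a Fubini--Study metric under an embedding. The other point needing care is matching the paper's precise normalizations of $\FS_k$ and of $\hat d$ in Section \ref{Sec:DefFSHilbk}, which only affects the explicit value of $C_k$.
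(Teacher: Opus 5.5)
Your argument is correct, and it reaches the bound by a different route than the paper.

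Both proofs rest on the same computation. Along the geodesic $H_t$, the speed $\tfrac{d}{dt}\FS_k(H_t)$ is pointwise a convex combination of the numbers $-\lambda_i/k$ (the paper's Lemma~\ref{u(t)-u(s)-lemma}).

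The paper uses this only to get the endpoint estimate $\sup_X\abs{\FS_k(H_1)-\FS_k(H_0)}\le \max_i\abs{\lambda_i}/k$. It then converts this into a $d_p$ bound via Darvas's comparison $d_p(u_0,u_1)^p\le C\,\frac1V\int_X\abs{u_0-u_1}^p(\omega_{u_0}^n+\omega_{u_1}^n)$ (Lemma~\ref{Lemma:DarvasDistanceL2normComparison}). That comparison is where its factor $C_p$ comes from.

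You instead use only the definition of $d_p$ as the Finsler length metric, applied to the explicit smooth path $t\mapsto\FS_k(H_t)$ in $\cH_\omega$. This bypasses the nontrivial comparison theorem and gives a cleaner constant. With the paper's normalization $\frac1N\Tr$, one has $\max_i\abs{\lambda_i}\le\sqrt{\Tr(A^2)}=\sqrt{N_k}\,\hat d(H_1,H_2)$, so you get $C_k=\sqrt{N_k}/k$ with no dependence on $p$.

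Even more simply, your length argument applied to the affine path $u_0+t(u_1-u_0)$ shows $d_p(u_0,u_1)\le\sup_X\abs{u_0-u_1}$ for all $u_0,u_1\in\cH_\omega$. Combined with the paper's endpoint estimate, this gives the same result.

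What the paper's route buys is uniformity with the proof of Theorem~\ref{Prop:ImageOfGeodesicIsQuasiGeodesic_SameSignEigenvalues}. There a lower bound on $d_p$ is needed, and a length-of-a-path argument can only give upper bounds, so the two-sided comparison lemma is required anyway.
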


More generally, given a geodesic line $\R \ni  t\mapsto H(t)$ in $\hat{\cH}(L^k)$, it is natural to investigate how closely the paths
$
\R \ni t\longmapsto \FS_k(H(t))
$
approximate geodesics.
It is known that $\FS_k(H(t))\in \cH_\omega$ for any fixed $t$ and that for any geodesic \emph{segment} $[0,1] \ni t \mapsto H(t) \in \hat{\cH}(L^k)$, the path $\FS_k(H(t))$ lies below the geodesic between $\FS_k(H_0)$ and $\FS_k(H_1)$ \cite{phong2005monge} (i.e. $\FS_k(H(t))$ is a \emph{subgeodesic}). Moreover, such subgeodesic converges in the $\mathcal{C}^0(X)$-topology to the geodesic segment in $\cH_\omega$.

 It is well known that a geodesic line in $\hat{\cH}(L^k)$ passing through $H_0=PP^*$, $P\in GL(N,\mathbb C)$, reads as $H(t) = P \exp (tA) P^*$. We denote by $\lambda_1\leq \cdots \leq \lambda_{N_k}$ the eigenvalues of $A$.
 Our next result concerns the globally defined path
$
\R \ni t\longmapsto \FS_k(H(t))$:

%\[Proposition ref{Prop:ImageOfGeodesicIsQuasiGeodesic_SameSignEigenvalues}]
\begin{ThA}
\label{Prop:INTROImageOfGeodesicIsQuasiGeodesic_SameSignEigenvalues}
Let $\R \ni t\mapsto H(t)$ be a geodesic in $\hat{\cH}(L^k)$, parametrized with constant speed. Assume $A$ is either definite or $\lambda_1<0$ and $\lambda_{N_k}>0$.
%Assume that the eigenvalue of the Hermitian matrix $\frac{d}{dt}\vert_{t=0} H(t)$ all have the same sign.
Then the path
$
\R \ni t \longmapsto \FS_k(H(t))
$
is a quasi-geodesic in $\cH_\omega$.
\end{ThA}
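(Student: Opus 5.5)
The plan is to prove the two inequalities defining a quasi-geodesic separately. I want constants $a\geq 1$ and $b\geq 0$ such that, writing $u_t:=\FS_k(H(t))$,
$$\frac1a|t-s|-b\leq d_p(u_t,u_s)\leq a|t-s|+b \qquad\text{for all } s,t\in\R.$$
The upper bound is immediate from Theorem \ref{Th:INTROFSkLipschitz}. Since $H(t)$ is a constant-speed geodesic, $\hat d(H(t),H(s))=c|t-s|$, so $d_p(u_t,u_s)\leq C_k c|t-s|$. All the work is therefore in the lower bound.

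For the lower bound I will diagonalize. Changing $P$ by a unitary, I may assume $A=\mathrm{diag}(\lambda_1,\dots,\lambda_{N_k})$, and take $s_1,\dots,s_{N_k}$ to be the $H_0$-orthonormal basis. Then, up to the sign convention of Section \ref{Sec:DefFSHilbk},
$$u_t=\frac1k\log\sum_i e^{t\lambda_i}|s_i|^2_{h^k}.$$
Since $p\geq1$, it suffices to bound $d_1$ from below, because $d_p\geq d_1$ once $V$ is normalized. I will use the functional $L(u)=\frac1V\int_X u\,\omega^n$ together with Darvas' estimate $|L(u)-L(v)|\leq C\,d_1(u,v)$. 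The key pointwise information is
$$\frac{t\lambda_{N_k}}{k}+\frac1k\log|s_{N_k}|^2\;\leq\; u_t\;\leq\;\frac{t\lambda_{N_k}}{k}+C \qquad (t\geq 0),$$
with the analogous two-sided bound for $t\leq0$ with $\lambda_1$ in place of $\lambda_{N_k}$. Since $\log|s_{N_k}|^2\in L^1(\omega^n)$, this gives
$$L(u_t)=\frac{t\lambda_{N_k}}{k}+O(1)\quad (t\ge0),\qquad L(u_t)=\frac{t\lambda_1}{k}+O(1)\quad (t\le0).$$

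In the definite case, say $A>0$, the function $t\mapsto L(u_t)$ is, up to a bounded error, piecewise linear with slopes $\lambda_1/k$ and $\lambda_{N_k}/k$, both positive. Hence $|L(u_t)-L(u_s)|\geq \frac{\lambda_1}{k}|t-s|-C$, and Darvas' estimate gives the lower bound. A cleaner alternative here is to note that $u_t-u_s\geq \lambda_1(t-s)/k$ pointwise for $t>s$. Then $d_1(u_t,u_s)=I(u_t)-I(u_s)\geq \frac{\lambda_1}{k}(t-s)$, where $I$ is the Monge–Ampère energy. The case $A<0$ is symmetric.

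The same argument via $L$ also handles the indefinite case when $s$ and $t$ have the same sign. For $t>s\geq0$, for instance, I will show $L(u_t)-L(u_s)\geq \lambda_{N_k}(t-s)/k-C$. To get this I write $u_t-\frac{t\lambda_{N_k}}{k}=\frac1k\log\sum_i e^{t(\lambda_i-\lambda_{N_k})}|s_i|^2$, which is decreasing in $t$ and bounded in $L^1$ by a compactness argument for sup-normalized $\omega$-psh functions.

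The main obstacle is the mixed case $s<0<t$ when $\lambda_1<0<\lambda_{N_k}$. Here both $L(u_t)$ and $L(u_s)$ grow like $\frac{t\lambda_{N_k}}{k}$ and $\frac{|s||\lambda_1|}{k}$ respectively, so $L$ cannot separate them. Instead I will use Darvas' Pythagorean formula
$$d_1(u_t,u_s)=d_1\big(u_t,P(u_t,u_s)\big)+d_1\big(P(u_t,u_s),u_s\big)=I(u_t)+I(u_s)-2I\big(P(u_t,u_s)\big),$$
where $P(u_t,u_s)$ is the rooftop envelope. Two estimates are then needed.

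First, I need $I(u_t)\geq \frac{t\lambda_{N_k}}{k}-C$. Convexity of $t\mapsto I(u_t)$ along the subgeodesic, together with control of its slope at infinity, should give this; the slope is governed by the top eigenvalue on the open set where $s_{N_k}\neq0$. The analogous bound holds for $u_s$.

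Second, I need $P(u_t,u_s)$ to stay bounded in energy. On the dense open set where both $s_1$ and $s_{N_k}$ are nonzero, the minimum $\min(u_t,u_s)$ grows, but the envelope should be controlled by $\frac1k\log\sum_i e^{\min(t\lambda_i,s\lambda_i)}|s_i|^2$. The exponent $\min(t\lambda_i,s\lambda_i)$ is $\leq 0$ for every $i$, so this function is bounded above by a fixed potential. This would give $I(P)\leq C$, and the bound $d_1\geq \min(\lambda_{N_k},|\lambda_1|)\,|t-s|/k-C$ follows.

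Making the comparison of $P(u_t,u_s)$ with that function rigorous is the step I expect to be hardest. I would first attempt it by checking that the candidate is $\omega$-psh and lies below both $u_t$ and $u_s$ up to a constant, since this yields only an upper bound on $I(P)$, which is the direction needed.
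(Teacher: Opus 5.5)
Your upper bound, the definite case, and the indefinite case with $s,t$ of the same sign are sound. They follow a different route from the paper: you test against the fixed measure $\omega^n$ through $L$, or use monotonicity of $I$ for ordered potentials, whereas the paper integrates $u'(\tau)$ against the moving measures $\omega_{u(t)}^n+\omega_{u(s)}^n$.

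You correctly locate the difficulty in the mixed case $s<0<t$. However, your plan there fails already at the first estimate, not only at the envelope step: the bound $I(u_t)\ge t\lambda_{N_k}/k-C$ is false. Take $X=\mathbb{P}^1$, $L=\mathcal{O}(1)$ with its Fubini--Study metric, $k=1$, $A=\mathrm{diag}(-1,1)$. Up to an additive constant, $u_t=\log\frac{e^{-t}|z_0|^2+e^{t}|z_1|^2}{|z_0|^2+|z_1|^2}$. This is the Mabuchi geodesic obtained by pulling back $\omega_{\mathrm{FS}}$ under $[z_0:z_1]\mapsto[e^{-t/2}z_0:e^{t/2}z_1]$. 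So $I$ is affine along it with slope $\frac1V\int_X\dot u_0\,\omega^n=0$ (by the symmetry $z_0\leftrightarrow z_1$), i.e. $I(u_t)$ is constant. This holds even though $u_t\approx t$ pointwise away from $[1:0]$: the measure $\omega_{u_t}$ concentrates at $[1:0]$, where $u_t=-t$. The slope of $I$ at infinity is an average of the weights, not the top one; for $A=\mathrm{diag}(-2,1)$ it is $-1/2$.

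Consequently an upper bound $I(P(u_t,u_s))\le C$ can never produce linear growth. Here $d_1(u_t,u_{-t})=t$, so the Pythagorean formula forces $I(P(u_t,u_{-t}))=I(u_0)-t/2$: all the growth comes from the envelope's energy tending to $-\infty$. Your mechanism for $P$ also points the wrong way. Since $\sum_i\min(a_i,b_i)\le\min(\sum_i a_i,\sum_i b_i)$, the function $\frac1k\log\sum_i e^{\min(t\lambda_i,s\lambda_i)}|s_i|^2$ is an $\omega$-psh minorant of $\min(u_t,u_s)$. It therefore lies below $P(u_t,u_s)$, and so bounds $I(P)$ from below, that is, $d_1$ from above.

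What is missing is a lower bound that sees where the Monge--Amp\`ere measures sit. Your two-sided pointwise bounds still handle the unbalanced part of the mixed regime via $L$ (e.g. $t\lambda_{N_k}\ge 2|s||\lambda_1|$). But when $t\lambda_{N_k}\approx|s||\lambda_1|$ they give $|u_t-u_s|\le C+\frac1k\bigl|\log|s_{N_k}|^2\bigr|+\frac1k\bigl|\log|s_1|^2\bigr|$. So even $\int_X|u_t-u_s|\,\omega^n$ stays bounded, and no argument based on a fixed reference measure can work. One option is Lemma~\ref{Lemma:DarvasDistanceL2normComparison} with the absolute value kept inside the integral, together with a proof that $\omega_{u_t}^n$ (resp. $\omega_{u_s}^n$) gives definite mass to a region where $u_s-u_t$ (resp. $u_t-u_s$) is of order $t-s$, as happens in the example. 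Another is an asymptotic argument showing that the ends $t\to\pm\infty$ separate linearly at every relative speed.

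You cannot import this step from the paper. Its cases (i)--(v) omit $s<-\tau_0<\tau_0<t$. Its Jensen step bounds $d_p^p$ from below by a multiple of $\bigl|\int_X(u(t)-u(s))\,(\omega_{u(t)}^n+\omega_{u(s)}^n)\bigr|^p$, which in the $\mathbb{P}^1$ example vanishes for every $s<t$; the asserted bound $\int_X u'(\tau)\,\omega_{u(t)}^n\ge\alpha$ is not uniform in $t$. So in the same-sign regime your fixed-measure argument is the more reliable of the two. As it stands, though, the proposal does not prove the statement in the mixed regime.
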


Recall that a quasi-geodesic in a metric space $(\mathcal{M}, d_{\mathcal{M}})$ is a path $\R \ni t \mapsto \gamma_t \in \mathcal{M}$ satisfying:
$$
\forall t,u \in \R, \quad C^{-1}\abs{t-u} - K \leq d_{\mathcal{M}}(\gamma_t, \gamma_u) \leq C\abs{t-u} + K,
$$
for some $C\geq 1$ and $K\geq 0$.

We also emphasize that the assumptions on the eigenvalues of $A$ are necessary as the example in Remark \ref{ex} shows.
\smallskip

It is also worth mentioning that Phong–Sturm \cite{PhongSturm2009}, Ross–Witt Nyström \cite{RWN_geo}, and Finski \cite{Finski} (in the more general big setting) show that, given a sequence (in $k$) of geodesic rays on $H^0(X, L^k)$ constructed from carefully chosen norms on $H^0(X, L^k)$  and appropriate speeds, the resulting sequence of quasi-geodesics converges, as $k\rightarrow +\infty$, to a weak \emph{geodesic ray}. An interesting open question is whether, and under what conditions, the quasi-geodesic $\R \ni t \longmapsto \FS_k(H(t))$ (defined on the entire real line) converges to a weak g\emph{eodesic line}.

\bigskip
%\subsubsection{Quantization of Geodesics on compact Kähler varieties}

Motivated by the study of the space of Kähler potentials on \emph{singular varieties} \cite{boucksom_monge-amp`ere_2008,darvas2017metric,di2018geometry}, we consider  the space of potentials with respect to a \emph{big} form. More precisely, as before we work on $(X,\omega)$ a compact smooth Kähler manifold  and we fix another smooth $(1,1)$-form $\theta$, which we assume to be real, closed, semi-positive and big. We say that $\theta$ is \emph{big} if ${\theta}$ can be represented by a strictly positive current, or in other words, if there exists $u\in \PSH(X, \theta)$ such that $\theta+dd^c u \geq \varepsilon \omega$, for some $\varepsilon>0$. When $\theta$ is semi-positive this simply amounts to $\int_X \theta^n>0$.\\
We recall that $\PSH(X, \theta)$ is the set of $\theta$-plurisubharmonic functions, i.e. the set of upper semicontinuous functions
$\{u:X\to \mathbb{R}\cup\{-\infty\} $ which locally write as the sum of a smooth and a plurisubharmonic function and such that $\theta_u:=\theta+dd^c u \geq 0$ in the weak sense of currents.
%We say a function is upper semicontinuous ({u.s.c.} for short) if for all $y\in X$  
%$ \limsup_{x\to y} u(x)\leq u(y). $
We set
$$\cH_\theta :=\{ u\in \PSH (X, \theta)\cap L^\infty(X) \cap C^\infty(\Amp(\theta )) \;\, : \;\,\theta_u:=\theta+dd^c u>0 \; {\rm{in}} \; \Amp(\theta )\}.$$
Here $\Amp( \theta)$ is the open set of points $x \in X$ such that there exists $\phi \in \PSH(X,\theta)$ which is smooth near $x$ and with analytic singularities (that is it can be locally written: $\phi = c\log \sum_j \abs{f_j}^2 +f$ where the $f_j$'s are holomorphic, $f$ is smooth and $c$ is a constant). We also mention that $\Amp( \theta)$ is a Zariski dense open subset of $X$ and it only depends on the cohomology class of $\{\theta\}$.
Moreover, it is shown in \cite[Theorem 3.17]{boucksom2004divisorial} that $\theta$ is Kähler if and only if $\Amp( \theta ) = X$.
\medskip

In \cite{di2018geometry}, the authors extended Mabuchi geometry to $\cH_\theta$, defining a distance and a notion of geodesics between potentials in $\cH_\theta$. Here we provide a quantization result in this more general setting generalizing Berndtsson's approximation of geodesics:

\begin{ThA}\label{Th:INTROQuantizationGeodesicsSemiPositiveEnonce1}
Let $u_0, u_1 \in \Hspace_\theta$ and $[0,1]\ni t\rightarrow u_t$ be the weak geodesic joining them. 
Then there exists a sequence of paths $(\hat{u}_{t,\ell})_{\ell\in\N}$ in the image of $\frac{1}{\ell}\FS_k$, where $k$ depends on $\ell$, such that for any compact $K\subset \Amp(\theta)$, $\hat{u}_{t,\ell}$ converges uniformly to $u_t$ in $K\times[0,1]$.
\end{ThA}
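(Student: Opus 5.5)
We reduce to the Kähler case by perturbing $\theta$, and then combine the known Kähler approximation with a stability result for geodesics.

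\textbf{Setup.} Fix $L$ ample with a metric of curvature $\omega$. Since quantization needs a line bundle, we assume $\{\theta\}=c_1(L_\theta)$ for a semi-positive big line bundle, or that $\theta$ is approximated through $\omega$. For $\ell\in\N$ consider
$$\theta_\ell := \theta+\tfrac{1}{\ell}\omega.$$
This form is Kähler. After rescaling we may assume it is the curvature of $L_\ell := L_\theta^{\ell}\otimes L$, so that $\ell\theta_\ell$ is integral.

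\textbf{Step 1: boundary data and geodesics in the Kähler classes.} Because $\theta_\ell\ge\theta$, we have $u_0,u_1\in\PSH(X,\theta_\ell)$. They are bounded, but only smooth on $\Amp(\theta)$. We choose smooth $\theta_\ell$-psh approximants $u_{i,\ell}$ by Demailly regularization, after adding a small multiple of a potential with analytic singularities to recover strict positivity. These satisfy $u_{i,\ell}\to u_i$ locally uniformly on $\Amp(\theta)$ and $\|u_{i,\ell}\|_\infty$ uniformly bounded. Let $u_{t,\ell}$ be the $\theta_\ell$-geodesic joining $u_{0,\ell}$ and $u_{1,\ell}$.

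\textbf{Step 2: stability.} We show that $u_{t,\ell}\to u_t$ locally uniformly on $\Amp(\theta)\times[0,1]$. The weak geodesic is the upper envelope of subgeodesics with boundary values below $u_0,u_1$. The envelopes increase or decrease with $\ell$, provided the boundary data are chosen monotone, for instance decreasing in $\ell$. A Dini-type argument then gives convergence. Concretely:
\begin{itemize}
\item The barrier $\max(u_0-At,\,u_1-A(1-t))$ is a $\theta$-subgeodesic, hence a $\theta_\ell$-subgeodesic. This gives uniform lower control.
\item Comparison with $u_t$ plus $O(1/\ell)$ times an $\omega$-potential gives upper control.
\end{itemize}
Locally uniform convergence on $\Amp(\theta)$ then follows from the interior $C^{1,1}$ or Lipschitz-in-$t$ estimates of geodesics in big classes, as in \cite{di2018geometry}.

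\textbf{Step 3: quantize in the Kähler class.} For fixed $\ell$ we apply the Berndtsson / Phong–Sturm quantization in the Kähler class $\theta_\ell$. Let $H_i=\Hilb_k(u_{i,\ell})$, let $H(t)$ be the Hermitian geodesic joining them, and set
$$\hat u^{(k)}_{t,\ell} := \tfrac1k\FS_k(H(t)).$$
By \cite{phong2005monge} and Berndtsson, $\hat u^{(k)}_{t,\ell}\to u_{t,\ell}$ uniformly on $X\times[0,1]$ as $k\to\infty$. We pick $k=k(\ell)$ so that the error is less than $1/\ell$. A diagonal argument with Step 2 then yields the theorem, with the normalizing factor $\frac1\ell\FS_k$ accounting for $\ell\theta_\ell$ being the integral class.

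\textbf{Main obstacle.} The hardest step is Step 2: obtaining \emph{uniform} convergence on compacts of $\Amp(\theta)$ rather than mere pointwise or capacity convergence. To handle it I would first apply the comparison principle, using Kiselman/Legendre transforms to reduce to envelopes $P_{\theta_\ell}(\cdot)$. I would then invoke the continuity of envelopes on the ample locus, namely $P_{\theta_\ell}(f)\searrow P_\theta(f)$ locally uniformly on $\Amp(\theta)$ for continuous $f$.
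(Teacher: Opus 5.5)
\textbf{Overall.} Your architecture is the paper's. You pass to the Kähler forms $\theta_\ell=\theta+\frac1\ell\omega$, replace the endpoints by smooth strictly $\theta_\ell$-psh data, show by monotonicity and Dini that the resulting geodesics converge to $u_t$ locally uniformly on $\Amp(\theta)$, then apply Berndtsson's theorem in the integral class $\ell\theta+\omega$ with a diagonal choice of $k=k(\ell)$.

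\textbf{The merge.} The one structural difference is that you fuse the paper's two approximations into a single monotone family. The paper first gets $U^\ell\searrow U$ with the data $u_0,u_1$ fixed, then $U^\ell_j\searrow U^\ell$ with the form fixed, then picks $j=j(\ell)$. Your merge is legitimate and even slightly cleaner. Dini then only needs continuity of $U$ on $\Amp(\theta)\times[0,1]$ (Proposition \ref{Prop:RegularityWeakGeodesicTheta}) and of Kähler geodesics with smooth data. By contrast, the paper's intermediate claim that $U^\ell_j\to U^\ell$ uniformly on all of $X\times\overline{S}$ forces $U^\ell$, hence $u_0,u_1$, to be continuous on $X$.

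\textbf{The gap is in Step 2, the step you flag.} You have the right skeleton, but you never show that the monotone limit equals $U$, and the mechanisms you offer instead do not work.
\begin{itemize}
\item No multiple of an $\omega$-potential gives upper control. Making a $\theta_\ell$-psh function $\theta$-psh by adding $c\phi$ would need $c\,dd^c\phi\ge\frac1\ell\omega$ on a compact manifold, which is impossible since $\int_X dd^c\phi\wedge\omega^{n-1}=0$.
\item Interior estimates uniform in $\ell$ near the non-ample locus are not available off the shelf. Lipschitz bounds in $t$ say nothing about $x$.
\item The Legendre detour would need continuity on $X$ of the limiting obstacles $\min(u_0,u_1-\tau)$, which holds only on $\Amp(\theta)$, plus uniformity in $\tau$.
\end{itemize}

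\textbf{The fix.} The missing argument is short; it is the one in Proposition \ref{Prop:QuantizationStep2}.
\begin{itemize}
\item With data $u_{i,\ell}$ decreasing in $\ell$, the envelopes $U_\ell$ decrease, because a $\theta_{\ell+1}$-competitor is a $\theta_\ell$-competitor.
\item $U\le U_\ell$ because $U$ is itself a $\theta_\ell$-competitor, so no lower barrier is needed.
\item The limit $W$ is $\pi_1^*\theta_m$-psh for every $m$, hence $\pi_1^*\theta$-psh.
\item Convexity in $t$ gives $U_\ell\le(1-t)u_{0,\ell}+tu_{1,\ell}$, hence $W\le(1-t)u_0+tu_1$. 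By upper semicontinuity of $u_0,u_1$, this yields the required boundary inequalities.
\item So $W$ is a competitor for $U$ and $W=U$. Dini on $K\times[0,1]$, for compact $K\subset\Amp(\theta)$, concludes.
\end{itemize}

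\textbf{Two smaller corrections.} First, adding a multiple of a potential with analytic singularities would make $u_{i,\ell}$ unbounded, so Berndtsson's theorem would no longer apply. It is also unnecessary. Take one Demailly sequence of smooth $v_j\searrow u_i$ with $\theta+dd^cv_j\ge-\varepsilon_j\omega$ and $\varepsilon_j\to0$; bounded functions have zero Lelong numbers. Set $u_{i,\ell}:=v_{j(\ell)}$ with $j(\ell)$ increasing and $\varepsilon_{j(\ell)}<\frac1{2\ell}$. This gives smooth, strictly $\theta_\ell$-psh data, decreasing in $\ell$.

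Second, in Step 3, $\Hilb_k$ must be applied to $\ell u_{i,\ell}\in\cH_{\ell\theta+\omega}$. In the paper's convention $\FS_k$ already contains the factor $\frac1k$. So the quantized path is $\frac1\ell\FS_k(H(t))$, with error $\frac{C_\ell}{\ell}\frac{\log k}{k}$.
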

We point out that the recent preprint \cite[Theorem 1.5 and Proposition 5.2]{Finski} provides an alternative proof of the convergence stated in Theorem C in the general big (not necessarily semi-positive) setting. As explained in \cite[Remark 1.6]{Finski}, the uniform convergence was anticipated. Our arguments yields uniform convergence directly in the semi-positive case.

\bigskip
%\subsection{The Space of Sasaki Potentials}
Building on the above result, we develop a corresponding quantization process in the Sasaki setting. Recall that a Sasaki manifold $(M,g)$ is an odd dimensional compact Riemannian manifold whose metric cone $C(M) = (\R^*_+ \times M, dr^2+r^2 g)$ is Kähler. In particular, $M$, which can be identified with the link $\left\lbrace r = 1 \right\rbrace \subset C(M)$, is a contact manifold with contact form $\eta = 2d^c \log(r)$. It defines a contact bundle $\ker \eta$ on which $\frac{1}{2}d\eta$ is a transverse Kähler form.  Moreover, any Sasaki manifold is endowed with a special vector field: the Reeb vector field $\xi$ which is the restriction of $J(r\partial_r)$ to the link $\left\lbrace r = 1 \right\rbrace$. Here $J$ denotes the complex structure on the Kähler metric cone $C(M)$. The restriction $\Phi$ of $J$ to the transverse distribution $\ker\eta$ is called a transverse complex structure. Using $\xi, \eta$ and $\Phi$, one can define a Riemannian metric: $g = \eta \otimes \eta + \frac{1}{2} d\eta (\cdot, \Phi \cdot)$. We call $\mathcal{S} := (\xi, \eta, \Phi, g)$ a \emph{Sasaki structure}.

By analogy with Kähler potentials, we consider
the space of potentials on a Sasaki manifold $(M,\mathcal{S})$:
$$
\mathcal{H}(M, \mathcal{S}) = \left\lbrace \phi\in\mathcal{C}^{\infty}_B(M),\; d\eta + dd^c\phi > 0 \right\rbrace,
$$
where $\mathcal{C}^{\infty}_B(M)$ is the space of smooth \emph{basic} functions (ie smooth functions which are invariant under the Reeb flow). Analogously to the Kähler setting, the $d^c$-operator, acting on basic functions, is defined as $\frac{i}{2}(\overline{\partial}_B - \partial_B)$ where the $\partial_B$ and $\overline{\partial}_B$ operator are defined in \cite{boyer_sasakian_2007}. We omit the subscript $B$ for simplicity. This infinite dimensional space, whose tangent space at any $u\in\mathcal{H}(M, \mathcal{S})$ is identified with $\mathcal{C}_B^\infty(M)$, is endowed with a Riemannian structure, analogue of the Mabuchi metric \cite{guan_geodesic_nodate, guan_regularity_2009}:
$$
\scal{\psi_1}{\psi_2}_u = \frac{1}{\int_M \eta \wedge d\eta^n}\int_M (\psi_1 \psi_2) \; \eta\wedge (d\eta + dd^c u)^n.
$$
Here and in the sequel, $2n+1$ is the real dimension of $M$.

Guan and Zhang \cite{guan_geodesic_nodate, guan_regularity_2009} proved the existence of $\mathcal{C}^{1,\overline{1}}$ geodesics using a Monge-Ampère type re-formulation for the geodesic equation. They also showed that the Riemannian structure on the tangent space of $\mathcal{H}(M, \mathcal{S})$ induces a distance  on $\mathcal{H}(M, \mathcal{S})$.
%$$
%d(u_0, u_1) := \inf\left\lbrace   \int_0^1 \sqrt{\scal{\dot{u_t}}{\dot{u_t}}_{u_t}}dt \, \bigg\vert \, t\mapsto u_t \text{ smooth path from } u_0 \text{ to } u_1 \right\rbrace.
%$$
\smallskip

We will make use of the results in the K\"ahler setting to study quantization for regular and quasi-regular Sasaki manifolds. Indeed, recall that if a Sasaki manifold is quasi-regular, then the space of orbits of the Reeb vector field is a Kähler orbifold. If it is regular, then the space of orbits is a Kähler manifold.

If $(M,\mathcal{S})$ is a regular or quasi-regular Sasaki manifold, we let $\pi : M \rightarrow X$ be the projection onto the space of orbits under the Reeb vector field.
We let $M^{\text{reg}} := \pi^{-1}(X^{\text{reg}})$ be the set of points in $M$ which lie in a regular orbit under the Reeb vector field (if $M$ is regular, then $M^{\text{reg}} = M$). In addition, we consider $\hat{\cH}(L^{k})$, the space of Hermitian metrics on the finite dimensional space of holomorphic sections of a certain power of a line bundle $L$ over a resolution of singularities of $X$ (see Section \ref{sec:QuantizationSasaki}).
In this setting we have:

\begin{ThA}
\label{Th:INTROQuantizGeodesicSasaki}
Let $u_0, u_1 \in \cH(M,\mathcal{S})$ and let $u_t$ be the weak geodesic joining $u_0$ and $u_1$. There exists a sequence of paths $(\hat{u}_{t,\ell})_{\ell\in\N}$, which are images of geodesics in $\hat{\cH}(L^k) $, such that for any compact set $K\subset M^{\text{reg}}$, $\hat{u}_{t,\ell}$ converges uniformly to $u_t$ in $K\times[0,1]$ .
\end{ThA}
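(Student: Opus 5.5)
The plan is to reduce Theorem \ref{Th:INTROQuantizGeodesicSasaki} to the semi-positive big Kähler case, Theorem \ref{Th:INTROQuantizationGeodesicsSemiPositiveEnonce1}, by passing to the orbit space and then to a resolution.

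\emph{Step 1: descend to the orbit space.} For a quasi-regular $(M,\mathcal{S})$, the quotient $\pi: M\to X$ is a compact Kähler orbifold, projective by the orbifold Kodaira embedding (Baily). The transverse form $\frac12 d\eta$ descends to an orbifold Kähler form $\omega_X$. For suitable $m$, $m\,\omega_X$ is the curvature of an orbifold line bundle whose $m$-th power $L_X$ is an honest ample line bundle on the underlying normal projective variety $X$. Basic functions on $M$ are exactly pullbacks of functions on $X$, and $d\eta+dd^c\phi = \pi^*(\omega_X+dd^c \phi)$ (up to the factor $2$). Hence:
\begin{itemize}
\item $\cH(M,\mathcal{S})$ is identified with the space of orbifold-smooth $\omega_X$-psh potentials on $X$;
\item since $\eta\wedge(d\eta+dd^c u)^n = \eta\wedge \pi^*(\omega_X+dd^cu)^n$ and the fibre integral of $\eta$ is constant on regular orbits, with the singular locus of measure zero, the Mabuchi metrics correspond up to a constant.
\end{itemize}
The weak Sasaki geodesic (Guan–Zhang) should then be the pullback of the weak Mabuchi geodesic on $X$. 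I would verify this through the Monge–Ampère characterization: both are the upper envelope of subgeodesics with the given boundary data, and basic subgeodesics correspond bijectively to $\omega_X$-subgeodesics on $X\times$ annulus.

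\emph{Step 2: resolve singularities.} Take a resolution $\mu:\tilde X\to X$ which is an isomorphism over $X^{\text{reg}}$. Set $\theta:=\mu^*\omega_X$ and $L:=\mu^*L_X$. Then $\theta$ is smooth, closed, semi-positive and big, since $\int_{\tilde X}\theta^n=\int_X\omega_X^n>0$. Moreover $\Amp(\theta)\supset \mu^{-1}(X^{\text{reg}})$, because $\mu^*$ of a Kähler form is Kähler where $\mu$ is a local isomorphism. The potentials $\mu^*u_0,\mu^*u_1$ are bounded, $\theta$-psh and smooth with $\theta_u>0$ on $\Amp(\theta)$, so they lie in $\cH_\theta$. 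The key claim is that the weak geodesic in $\cH_\theta$ joining them is $\mu^*u_t$. It is a candidate in the envelope, and conversely any $\theta$-subgeodesic is bounded and hence, being constant on the connected fibres of $\mu$, descends to $X$ by normality (Zariski's main theorem / the Riemann extension theorem for psh functions). So the two envelopes coincide.

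\emph{Step 3: apply the Kähler result and pull back.} By Theorem \ref{Th:INTROQuantizationGeodesicsSemiPositiveEnonce1} applied to $(\tilde X,\theta,L)$, there are paths $\hat u_{t,\ell}=\frac1\ell\FS_k(H_\ell(t))$, images of geodesics in $\hat{\cH}(L^k)$, converging uniformly on compacts of $\Amp(\theta)$ to $\mu^*u_t$. Let $K\subset M^{\text{reg}}$ be compact. Then $\pi(K)$ is a compact subset of $X^{\text{reg}}$, and $\mu^{-1}(\pi(K))$ is a compact subset of $\Amp(\theta)$. Define the Sasaki paths as $(\mu^{-1}\circ\pi)^*\hat u_{t,\ell}$, rescaled by $m$ to account for the power of $L_X$. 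These are basic, and they converge uniformly to $u_t$ on $K\times[0,1]$.

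The main obstacle is the identification of weak geodesics across the two reductions: Sasaki to orbifold, and orbifold to resolution. The second is the delicate one, since Theorem \ref{Th:INTROQuantizationGeodesicsSemiPositiveEnonce1} only controls things on $\Amp(\theta)$. I would handle it with the envelope description of geodesics together with the descent of bounded $\mu^*\omega_X$-psh functions to $X$, using that $\mu$ has connected fibres because $X$ is normal.
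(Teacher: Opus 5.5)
Your proposal is correct and is essentially the paper's argument: reduce to the orbit space, pull the geodesic back to the resolution (your $\mu$, the paper's $\rho$), where it is the weak geodesic in $\cH_\theta$ for $\theta=\rho^*\omega$, apply Theorem~\ref{Th:QuantizationGeodesicsSemiPositive}, and transport the approximating paths back to $M^{\text{reg}}$ via the biholomorphism $\rho^{-1}(X^{\text{reg}})\simeq X^{\text{reg}}$ and the correspondence $\mathrm{f}$ with basic functions. The paper simply asserts the identification $v_t=\rho^*(\mathrm{p}(u_t))$, which you propose to justify by envelopes and descent along the connected fibres of $\rho$; two small corrections apply: the finite-dimensional spaces that theorem provides are sections of powers of the ample bundles on $\tilde X$ with curvature $\ell\theta+\tilde\omega$, not of $\rho^*L_X$ (which is in general not ample on $\tilde X$), and in the descent step you should apply the boundedness argument to the envelope itself, since arbitrary subgeodesics need not be bounded below.
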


%The above statement is intentionally left vague and we refer to Theorem ~\ref{Th:QuantizationGeodesicsSemiPositive} and Theorem \ref{Th:QuantizGeodesicSasaki} for a precise statement, for an explicit expression of the sequence of path $(\hat{u}_{t,l})_{l\in\N}$ and for an explicit choice of both the line bundle $L$ and the power $k(l)$.

%Theorem \ref{Th:INTROQuantizGeodesicSasaki} is an almost straightforward consequence of a much more general result which we explain below.

\paragraph{Organization of the paper.}  Section \ref{Sec:QuantizationKahler} recalls the geometry of Hermitian symmetric spaces and the definition of the Fubini–Study map, and proves Theorems \ref{Th:INTROFSkLipschitz} and \ref{Prop:INTROImageOfGeodesicIsQuasiGeodesic_SameSignEigenvalues}.
Section \ref{Sec:PotentialsWithSemiPositiveForm} establishes the quantization result in the semi-positive and big setting (Theorem \ref{Th:INTROQuantizationGeodesicsSemiPositiveEnonce1} ). Section \ref{sec:QuantizationSasaki} develops the Sasaki framework and proves Theorem \ref{Th:INTROQuantizGeodesicSasaki}.

\section*{Acknowledgements}
%The first author is supported by the project SiGMA ANR-22-ERCS-0004-02. 
The second author is supported by the ERC grant SiGMA (No. 101125012, PI: Eleonora Di Nezza). She also gratefully acknowledges the University of Melbourne (MATRIX campus) for providing a welcoming and stimulating environment during the research program ``Analytic and Geometric Methods on Complex Manifolds" in which this project was completed.
Moreover, we thank S\'ebastien Boucksom, Tamas Darvas, Siarhei Finski and Mattias Jonsson for helpful discussions and comments on a previous draft of the note.

%Part of this material is based upon work done while the first author was supported by the National Science Foundation under Grant No. DMS-1928930, while the author was in

\section{The Fubini-Study map $\FS_k$}
\label{Sec:QuantizationKahler}

After a brief review over the geometry of the set of Hermitian matrices, we will recall the definitions of the main two maps $\FS_k$ and $\Hilb_k$ which are at the core of Kähler quantization.

\subsubsection*{The space of Hermitian Matrices} 
 
We furnish the space of Hermitian matrices with a structure of symmetric Riemannian manifold. We are particularly interested in the explicit description of geodesics.

\medskip

Let $\Herm^{++}(N,\C)$ be the space of definite positive Hermitian matrices of size $N\times N$.
We furnish $\Herm^{++}(N,\C)$ with a Riemannian metric:
\begin{equation}
\label{eq:DefRiemMetricOnHerm}
\scal{H_1}{H_2}_{H}: = \frac{1}{N} \Tr (H^{-1}H_1 H^{-1}H_2),
\end{equation}
where $H \in \Herm^{++}(N,\C)$ and $H_1, H_2 \in T_H{\Herm^{++}(N,\C)} \simeq \Herm(N,\C)$. The scaling factor $\frac{1}{N}$ will appear to be nice in our setting since the dimension of the matrices that we will consider will be bigger and bigger.

\medskip
Let us recall that $\text{Herm}^{++}(N,\mathbb C)$ is a symmetric space of non positive curvature, \cite{Hel}. 
For $P\in GL(N,\mathbb C)$, we denote $P^{*}$ the adjoint of $P$. The action of $GL(N, \mathbb C)$ on $\text{Herm}^{++}(N,\mathbb C)$ defined for $P\in GL(N,\mathbb C)$ and $H\in \text{Herm}^{++}(N,\mathbb C)$ by
$P. H := P H P^{*}$ is a transitive isometric action. In particular every $H\in \text{Herm}^{++}(N,\mathbb C)$ can be written as $P.Id=PP^*$ for some $P\in GL(N,\mathbb C)$. Hence, $\text{Herm}^{++}(N, \mathbb C)$ may be identified with
$GL(N, \mathbb C) \slash U(N,\mathbb C)$, where $U(N, \mathbb C)$ is the stabilizer of $Id$.
A path $t\mapsto H(t) \in \Herm^{++}(N,\C)$ is a geodesic with respect to the metric \eqref{eq:DefRiemMetricOnHerm} if and only if $\ddot{H} = \dot{H}H^{-1} \dot{H}$, or equivalently:
$
\frac{d}{dt}\left( H^{-1} \dot{H} \right) = 0.
$

\noindent As a consequence, geodesics $H(t)$ in $\Herm^{++}(N,\C)$ such that $H(0) = H_0$
all are of the form:
\begin{equation}
\label{paramgeod}
\R \ni t \mapsto H(t) = P \exp(tA) P^{*},
\end{equation}
where $H_0= P P^{*}$ with $P\in GL(N,\mathbb C)$ and $P A P^{*}  \in \Herm(N,\C)$ is the initial direction. Notice that the above geodesics have constant speed equal to $\sqrt{\frac{1}{N}\text{Tr} (A^2)}$,
in particular, for any $s < t \in \R$, the riemannian distance between $H(s)$ and $H(t)$ is equal to 
\begin{equation}
\label{distancealonggeodesic}
\hat{d}(H(s), H(t)) = \sqrt{\frac{1}{N} \Tr \left( A^2 \right)} \abs{t - s}.
\end{equation}

\subsubsection*{The Kähler Quantization}
\label{Sec:DefFSHilbk}

Consider $(L,h_0)\rightarrow X$ a positive line bundle whose curvature is $\omega$. Observe that, given any $u \in \Hspace_\omega$, $h_0 e^{-u}$ is an other positive hermitian metric on $L$. By abuse of notation we will still denote by $h_0$ the induced hermitian metric on the space of holomorphic sections $H^0(X,L^k)$, defined as
$$h_0(s, s'):= \int_X h_0(s(x), s'(x)) \omega^n, \qquad s,s'\in H^0(X,L^k).$$ The space $H^0(X,L^k)$ is a finite dimensional space, we write $N_k$ its dimension. Let $\hat{\Hspace}(L^k)$ be the space of Hermitian metrics on $H^0(X,L^k)$. The space $\hat{\Hspace}(L^k)$ can be identified with $\Herm^{++}(N_k, \C)$. Indeed, given $\{s_i\}$ an orthonormal basis for $h_0$, any $h\in\hat{\Hspace}(L^k)$ is associated with a unique $H\in \Herm^{++}(N_k, \C)$ so that $H_{ij}=h(s_i, s_j)$. In the following, using the above identification, we will talk about the $H$-orthonormal basis of sections, instead of $h$-orthonormal.

The goal here is to use the finite dimensional spaces $\hat{\cH}(L^k)$ to approximate the infinite dimensional space $\cH_\omega$.
Such a quantization process involves two maps:
$$
\Hilb_k : \cH_\omega \rightarrow \hat{\Hspace}(L^k), \qquad \FS_k : \hat{\Hspace}(L^k)\rightarrow \cH_\omega.
$$

Let us start with the definition of the map $\Hilb_k : \Hspace_\omega \rightarrow \hat{\Hspace}(L^k)$ associating to each $u \in \Hspace_\omega$ the Hermitian metric:
$$
\forall s \in H^0(X,L^k), \quad \Hilb_k(u)(s,s) := \int_X \abs{s}_{(h_0 e^{-u})^k}^2 \omega^n = \int_X \abs{s}_{h_0^k}^2 e^{-k u} \omega^n.
$$

\begin{Remark}
\label{Rk:VolumeFormHilb}
Note that the volume form we use here is not the same as in the classical definition \cite[p347]{donaldson2005scalar} of the $\Hilb_k$ map.
\end{Remark}

In the opposite direction, given $H$ a Hermitian metric on $H^0(X,L^k)$, we consider $\left\lbrace s_j \right\rbrace$ a $H$-orthonormal basis and we set:
\begin{equation}
\label{eq:DefFSk}
\FS_k (H) = \frac{1}{k}\log\left( \sum_j \abs{s_j}_{h_0^k}^2 \right) \in \cH_\omega.
\end{equation}

We recall a fundamental result due to Berndtsson \cite[Theorem 1.2]{berndtsson2013probability}:

\begin{Th}
\label{Th:GeometricQuantizationKahler}
%Let $L\rightarrow (X,\omega)$ be a Hermitian line bundle of curvature $\frac{1}{2\pi}\omega$. 
Let $u_0, u_1 \in \Hspace_\omega$, and $[0,1] \ni t \mapsto u_t$ be the weak geodesic joining $u_0$ and $u_1$. We write $H_i = \Hilb_k(u_i)$ for $i=0,1$ and we let $H(t)$ be the geodesic in $\hat{\Hspace}(L^k)$ from $H_0$ to $H_1$. Then there is a constant $C>0$ such that:
$$
\sup_{(x,t) \in X \times [0,1]} \abs{\FS_k(H(t))(x) - u_t(x)} \leq C \frac{\log k }{k}.
$$
In particular, the weak geodesic $u_t$ can be approximated in the $\mathcal{C}^0(X)$-topology by the quantized path $\FS_k(H(t))$.
\end{Th}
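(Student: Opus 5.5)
The plan is to prove the two one-sided estimates $\FS_k(H(t)) \le u_t + C\frac{\log k}{k}$ and $\FS_k(H(t)) \ge u_t - C\frac{\log k}{k}$ separately. The upper bound will come from the fact that images of geodesics are subgeodesics, and the lower bound from Berndtsson's positivity of direct images. Throughout, the deviation of our volume form from the standard one (Remark \ref{Rk:VolumeFormHilb}) only changes $L^2$ norms by a factor bounded above and below. One can also absorb $K_X$ by writing $e^{-ku}\omega^n$ as a metric on $L^k\otimes K_X$ twisted by a fixed smooth weight. Either way, this costs only $O(1/k)$ after taking $\frac1k\log$.

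\emph{Upper bound.} Write $H_0 = PP^*$ and $H(t) = P\exp(tA)P^*$, and diagonalize $A$ with eigenvalues $\lambda_j$. If $\{s_j\}$ is an $H_0$-orthonormal basis adapted to $A$, then $\{s_j e^{-t\lambda_j/2}\}$ is $H(t)$-orthonormal. Hence
\begin{equation}
\FS_k(H(t)) = \frac1k \log \sum_j \abs{s_j}^2_{h_0^k} e^{-t\lambda_j}.
\end{equation}
Extending $t$ to $\tau$ in the strip $\{0<\Rea \tau<1\}$ with $t=\Rea\tau$, this function is $\pi^*\omega$-psh on $X\times$ strip, so it is a subgeodesic. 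At the endpoints, $\FS_k(\Hilb_k(u_i)) = u_i + \frac1k\log B_k$, where $B_k$ is the Bergman density function for the weight $e^{-ku_i}\omega^n$. The Tian–Catlin–Zelditch expansion, or just a crude sub-mean-value bound using $\dim H^0 = N_k \sim k^n$, gives $B_k \le Ck^n$. Therefore the boundary values satisfy $\FS_k(H_i) \le u_i + C\frac{\log k}{k}$. Since $u_t$ is the upper envelope of subgeodesics with boundary values at most $u_0,u_1$, the maximum principle yields the upper bound.

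\emph{Lower bound.} This is the main obstacle. I would consider the curve $t\mapsto \Hilb_k(u_t)$ in $\hat\cH(L^k)$, extended to be $S^1$-invariant on the strip. Since $u_t$ solves the homogeneous Monge–Ampère equation and is in particular $\pi^*\omega$-psh in $(x,\tau)$, Berndtsson's theorem on positivity of direct images applies. It says that the trivial bundle with fibre $H^0(X,L^k\otimes K_X)$ and metric $\Hilb_k(u_t)$ has Griffiths semi-positive curvature on the strip. The geodesic $H(t)$ is exactly the flat metric with the same endpoint values, since both curves equal $\Hilb_k(u_i)$ at $t=0,1$. A maximum principle for metrics on vector bundles over the strip then gives $\Hilb_k(u_t) \le H(t)$ as Hermitian forms. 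Concretely, for every holomorphic section $\sigma$ of the dual bundle, $\log\|\sigma\|^2$ is subharmonic for the dual of the positively curved metric and harmonic for the dual of the flat one; comparing these dual norms gives the inequality. Because $\FS_k$ is decreasing with respect to the order on Hermitian forms (a smaller norm yields a larger Bergman kernel), we get $\FS_k(H(t)) \ge \FS_k(\Hilb_k(u_t))$.

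It remains to show $\FS_k(\Hilb_k(u_t)) \ge u_t - C\frac{\log k}{k}$ uniformly in $t$. This follows from the Ohsawa–Takegoshi extension theorem applied at each point $x$, with the weight $k u_t$ plus the fixed positive weight of $L$. Since $\omega + dd^c u_t \ge 0$ and $u_t$ is uniformly bounded (it is $\mathcal{C}^{1,1}$), this produces a section $s$ with $\abs{s(x)}^2 e^{-ku_t(x)} = 1$ and $\Hilb_k(u_t)(s,s) \le C k^{n}$, with $C$ independent of $t$. Taking $\frac1k\log$ gives the loss $C\frac{\log k}{k}$. The delicate point is making the direct image positivity and the comparison argument rigorous when $u_t$ is only $\mathcal{C}^{1,1}$. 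For that I would first apply the argument to $\varepsilon$-regularized subgeodesics, namely smooth strictly $\pi^*\omega$-psh solutions of the perturbed equation with the same boundary data, and then let $\varepsilon\to 0$.
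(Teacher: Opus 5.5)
Your route is Berndtsson's own proof of his Theorem 1.2, which is all the paper invokes; the paper additionally cites Darvas--Lu--Rubinstein for the change of volume form. Your upper bound, via the subgeodesic property of $\FS_k(H(t))$ and $B_k\le Ck^n$ at the endpoints, is correct as written. The lower bound has two problems.

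\textbf{First, the comparison goes the other way.} A Griffiths semipositive metric \emph{dominates} the flat metric with the same boundary values. Already in rank one: if $t\mapsto\psi_t$ is convex, then $e^{-\psi_t}\ge e^{-((1-t)\psi_0+t\psi_1)}$. So what holds is $\Hilb_k(u_t)\ge H(t)$. With the inequality you wrote, monotonicity of $\FS_k$ would give $\FS_k(\Hilb_k(u_t))\ge \FS_k(H(t))$, which is useless. It is $\Hilb_k(u_t)\ge H(t)$ that yields $\FS_k(H(t))\ge \FS_k(\Hilb_k(u_t))$.

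Your dual-section argument does prove the correct inequality, but only if you take $\sigma$ parallel for the flat dual metric, so that $\|\sigma\|_{H(t)^*}$ is constant. For an arbitrary holomorphic section, $\log\|\sigma\|^2_{H(t)^*}$ is merely subharmonic, not harmonic, and nothing can be compared. Boundedness of $\log\|\sigma\|^2$ on the strip then justifies the maximum principle.

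\textbf{Second, the volume form cannot be dismissed by a bounded-factor remark at the direct-image step.} A curvature sign is not stable under bounded perturbations of the norms. Write $U(x,\tau)=u_{\Rea\tau}(x)$ on $X\times S$. For $s\in H^0(X,L^k)$ normed by $\int_X\abs{s}^2_{h_0^k}e^{-ku_t}\omega^n$, Berndtsson's theorem applies to $L^k=K_X\otimes F$. Here $F=L^k\otimes K_X^{-1}$ carries a metric of curvature $k(\pi^*\omega+dd^cU)+\pi^*\mathrm{Ric}(\omega)$, where $\mathrm{Ric}(\omega)$ is the curvature of the metric on $K_X^{-1}$ induced by $\omega^n$. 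Since $\pi^*\omega+dd^cU$ is degenerate along the geodesic, this need not be semipositive.

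One way to repair this:
\begin{enumerate}
\item Choose $k_0$ with $k_0\omega+\mathrm{Ric}(\omega)\ge 0$, and set $G_t(s,s):=\int_X\abs{s}^2_{h_0^k}e^{-(k-k_0)u_t}\omega^n$. This metric is semipositively curved on the strip. It satisfies $e^{-k_0M}\Hilb_k(u_t)\le G_t\le e^{k_0M}\Hilb_k(u_t)$, with $M=\sup\abs{U}$.
\item Let $H'(t)$ be the geodesic from $G_0$ to $G_1$. The comparison gives $G_t\ge H'(t)$.
\item Apply the comparison once more, to the flat (hence semipositive) $H'(t)$ against the flat $e^{-k_0M}H(t)$, which it dominates at $t=0,1$. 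This gives $H'(t)\ge e^{-k_0M}H(t)$.
\item Hence $\Hilb_k(u_t)\ge e^{-2k_0M}H(t)$, and so $\FS_k(H(t))\ge \FS_k(\Hilb_k(u_t))-2k_0M/k$.
\end{enumerate}

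The same $k_0$-trick is what makes your Ohsawa--Takegoshi step work uniformly in $t$, even though $\omega+dd^c u_t$ is degenerate and $\mathrm{Ric}(\omega)$ must be absorbed. With these two repairs your argument proves the theorem. The volume-form point is exactly what the paper delegates to Darvas--Lu--Rubinstein.
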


As mentioned in Remark \ref{Rk:VolumeFormHilb}, our definition of the map $\Hilb_k$ is not the classical one. It follows from \cite{berndtsson2013probability} that we can choose any volume form in the definition of $\Hilb_k$ so that the above classical approximation theorem is still valid. We refer to \cite[Theorem~1.2]{darvas2020quantization} for more details.

\subsubsection*{Quantized geodesic lines are quasi-geodesics}
\label{Sec:ImageByFSkOfGeodesics}

%It is natural to wonder what is the image of a geodesic by the map $\FS_k$. Let us investigate to what extent such a path is close to a geodesic.

%Here and in the sequel, $(L,h)\rightarrow (X,\omega)$ is a Hermitian line bundle whose curvature is $\frac{1}{2\pi}\omega$.

In view of Theorem \ref{Th:GeometricQuantizationKahler}, the image by $\FS_k$ of the geodesic segment between $\Hilb_k (u_0)$ and $\Hilb_k (u_1)$ in $\Herm^{++}(N_k,\C)$ is close, in $\mathcal{C}^0$-norm, to the geodesic in $\cH_\omega$ between the two potentials $u_0$ and $u_1$, at least for $k$ large enough. Notice that this holds for \emph{bounded geodesic segment}. 
%Geodesics in $\Herm^{++}(N_k,\C)$ are well defined as geodesic rays $\R \rightarrow \Herm^{++}(N_k,\C)$. 
We now study the image by the map $FS_k$ of \emph{complete geodesic lines} 
$\R \rightarrow \Herm^{++}(N_k,\C)$ in \eqref{paramgeod}, keeping in mind
the same question as above: are they close to geodesics lines in $\Hspace_\omega$? We don't answer this question but we prove that $FS_k$ maps geodesic lines in $\Herm^{++}(N_k,\C)$ onto quasi-geodesics in $\Hspace_\omega$. Let us recall the notion of quasi-geodesics:

\begin{Def}
\label{Def:QuasiGeodesic}
A \emph{quasi-geodesic} in a metric space $(\mathcal{M},d_\mathcal{M})$ is a path $I \ni t\mapsto \gamma_t \in \mathcal{M}$ such that there exist constants $C, K >0$ such that:
$$
\forall t,s \in I, \quad C^{-1} \abs{t-s} - K \leq d_\mathcal{M}(\gamma_t, \gamma_s) \leq C\abs{t-s} + K.
$$
Here, $I\subset \R$ is an interval.
\end{Def}

In what follows we will need the following two lemmas. The first one provides a comparison between the Mabuchi distance and some explicit analytic expression.

\begin{Lemma}[{\cite[Theorem 3.32]{darvas735geometric}}]
\label{Lemma:DarvasDistanceL2normComparison}
There exists $C=C(p)>0$ such that for any $u_0, u_1 \in \Hspace_\omega$ we have:
$$
\frac{1}{C}d_p(u_0,u_1)^p \leq \frac{1}{V}\int_X \abs{u_0 - u_1}^p \left( \omega_{u_0}^n + \omega_{u_1}^n \right) \leq C d_p(u_0,u_1)^p.
$$
\end{Lemma}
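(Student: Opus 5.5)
The plan is to reduce to the case of \emph{ordered} potentials, where the geodesic is monotone and the comparison follows from convexity in $t$, and then to pass to the general case through the rooftop envelope $w:=P(\min(u_0,u_1))$, using Darvas' Pythagorean formula. Throughout we work in the metric completion $(\mathcal{E}^p,d_p)$, where $w$ and $\max(u_0,u_1)$ make sense even though they are not smooth. The facts we use are: the identity \eqref{eq: geodesic distance intro} at $t=0,1$; the $C^{1,1}$ regularity of weak geodesics; and the Pythagorean relation $d_p(u_0,u_1)^p\asymp d_p(u_0,w)^p+d_p(w,u_1)^p$, which holds up to constants depending only on $p$.

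\emph{Step 1 (one-sided bounds from convexity).} Let $u_t$ be the weak geodesic. Convexity of $t\mapsto u_t(x)$ gives
\[
\dot u_0\le u_1-u_0\le \dot u_1 .
\]
Hence $(u_0-u_1)_+\le(-\dot u_0)_+$ and $(u_1-u_0)_+\le(\dot u_1)_+$. Applying \eqref{eq: geodesic distance intro} at $t=0$ and $t=1$ then yields
\[
\int_X(u_0-u_1)_+^p\,\omega_{u_0}^n\le V d_p^p
\qquad\text{and}\qquad
\int_X(u_1-u_0)_+^p\,\omega_{u_1}^n\le Vd_p^p .
\]

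\emph{Step 2 (the ordered case).} Suppose $u\le v$. The geodesic from $u$ to $v$ is then increasing in $t$, so $0\le\dot u_0\le v-u\le\dot u_1$, which gives
\[
\int_X(v-u)^p\,\omega_v^n\le Vd_p(u,v)^p\le\int_X(v-u)^p\,\omega_u^n .
\]
To close this into a two-sided estimate, I would prove
\[
\int_X(v-u)^p\,\omega_u^n\le 2^{n+p}\int_X(v-u)^p\,\omega_v^n \qquad (u\le v).
\]
The route is to compare with $\tfrac{u+v}{2}$: write $\omega_{(u+v)/2}^n\ge 2^{-n}\omega_u^n$ and use the comparison principle on the sublevel sets $\{v-u>s\}$, then integrate in $s$ with the weight $ps^{p-1}$. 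This step I expect to be the main technical obstacle, since the comparison principle has to be applied carefully on these sublevel sets.

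\emph{Step 3 (general case, upper bound of the integral).} The remaining terms not covered by Step 1 are $\int_X(u_1-u_0)_+^p\,\omega_{u_0}^n$ and its symmetric counterpart. Set $m:=\max(u_0,u_1)\ge u_0$, so that $(u_1-u_0)_+=m-u_0$. Step 2 gives
\[
\int_X(m-u_0)^p\,\omega_{u_0}^n\le C\,d_p(u_0,m)^p .
\]
The monotonicity/Pythagorean estimates then give $d_p(u_0,m)\le C d_p(u_0,u_1)$. Combined with Step 1, this proves
\[
\frac1V\int_X|u_0-u_1|^p\bigl(\omega_{u_0}^n+\omega_{u_1}^n\bigr)\le C d_p^p .
\]

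\emph{Step 4 (general case, lower bound of the integral).} By the Pythagorean relation and Step 2 applied to the ordered pairs $w\le u_i$,
\[
d_p(u_0,u_1)^p\le C\sum_{i=0,1}\frac1V\int_X(u_i-w)^p\,\omega_w^n .
\]
The Monge--Ampère measure $\omega_w^n$ is supported on the contact set $\{w=\min(u_0,u_1)\}$, and on that set
\[
\omega_w^n\le \mathbf 1_{\{w=u_0\}}\omega_{u_0}^n+\mathbf 1_{\{w=u_1\}}\omega_{u_1}^n .
\]
On $\{w=u_1\}$ we have $u_0-w=u_0-u_1$, and symmetrically on $\{w=u_0\}$. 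So each integral is bounded by $\int_X|u_0-u_1|^p(\omega_{u_0}^n+\omega_{u_1}^n)$, which gives the remaining inequality.
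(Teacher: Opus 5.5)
The paper does not reprove this lemma; it quotes Darvas, so the comparison is with Darvas's argument. Your Step 1, the first display of Step 2 and Step 4 are correct, and Step 4 does prove the left-hand inequality. The right-hand inequality, however, has a genuine gap.

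\emph{Step 2 rests on a false inequality.} The inequality you propose, $\int_X(v-u)^p\omega_u^n\le 2^{n+p}\int_X(v-u)^p\omega_v^n$ for $u\le v$, fails for every constant. On $X=\mathbb{P}^1$ with $\omega=dd^c\log(1+|z|^2)$, take $v=0$ and $u_j=\log\frac{|z|^2+1/j}{|z|^2+1}\in\Hspace_\omega$. Since $0\le -u_j\le -\log\frac{|z|^2}{1+|z|^2}$, the right side stays bounded. But $\omega_{u_j}=dd^c\log(|z|^2+1/j)$ puts half of its mass on $\{|z|^2\le 1/j\}$, where $-u_j\ge\log(j/2)$, so the left side blows up. For ordered pairs it is the measure of the \emph{smaller} potential that is comparable to $d_p^p$. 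The comparison principle on $\{v-u>s\}$, with or without passing through $\frac{u+v}{2}$, only yields inequalities in the opposite direction. What is true, and what Step 3 actually uses, is $\int_X(v-u)^p\omega_u^n\le 2^{n+p}V d_p(u,v)^p$. Since $u\le\frac{u+v}{2}\le v$, comparing the two geodesics issuing from $u$ and using the endpoint formula at $t=0$ gives $d_p(u,\frac{u+v}{2})\le d_p(u,v)$. The first display of Step 2 applied to the pair $(u,\frac{u+v}{2})$, together with $\omega_{(u+v)/2}^n\ge 2^{-n}\omega_u^n$, then concludes. So the midpoint must be combined with monotonicity of $d_p$, not with a comparison of measures.

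\emph{Step 3 is circular at its key point.} The claim $d_p(u_0,\max(u_0,u_1))\le C d_p(u_0,u_1)$ does not follow from monotonicity: you have no potential above $m$ at controlled distance from $u_0$. Nor does it follow from the Pythagorean formula, which concerns $P(u_0,u_1)$, whereas $P(u_0,m)=u_0$. In fact, by the corrected ordered estimate, $Vd_p(u_0,m)^p$ is comparable to $\int_X(u_1-u_0)_+^p\omega_{u_0}^n$, which is exactly the hard term you are trying to bound.

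A repair along Darvas's lines proceeds in two steps.
\begin{enumerate}
\item Apply Step 1 to the pair $(\frac{u_0+u_1}{2},u_0)$ to get $\int_X(u_1-u_0)_+^p\omega_{u_0}^n\le 2^{n+p}Vd_p(u_0,\frac{u_0+u_1}{2})^p$.
\item Prove the midpoint estimate $d_p(u_0,\frac{u_0+u_1}{2})\le Cd_p(u_0,u_1)$. Go through $w=P(u_0,u_1)$ with the triangle inequality, and bound $Vd_p(w,\frac{u_0+u_1}{2})^p\le\int_X(\frac{u_0+u_1}{2}-w)^p\omega_w^n$. The measure $\omega_w^n$ is carried by $\{w=u_0\}\cup\{w=u_1\}$, where $\frac{u_0+u_1}{2}-w$ equals $\frac{u_1-w}{2}$, resp.\ $\frac{u_0-w}{2}$. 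Then apply the corrected ordered estimate to the pairs $(w,u_i)$ and finish with the Pythagorean formula.
\end{enumerate}
The term $\int_X(u_0-u_1)_+^p\omega_{u_1}^n$ is handled symmetrically.
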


The second lemma describes the variation of the potential $FS_k (H(t))$ along a geodesic $H(t)$ of $\hat{\mathcal H} (L^k) $ identified with $ \Herm^{++}(N_k, \mathbb C)$.
\begin{Lemma}\label{u(t)-u(s)-lemma}
Let $A\in \Herm (N_k, \mathbb C)$ and
$H_0 = P P ^* \in \Herm^{++}(N_k, \mathbb C)$ %\textcolor{red}{can we always write $H_0$ like that?} \textcolor{green}{c'est expliqu\'e au d\'ebut de la section 2.1: $GL(N_, \mathbb C)$ agit transitivement sur $\Herm^{++}$ par $PHP^*$, et on applique \`a $H =  Id$.} 
and $H(t) := P \exp (tA) P^*$ a geodesic passing through $H(0) = H_0$. Then, for every $s \leq t$,
\begin{equation}\label{u(t)-u(s)}
\frac{\max(\lambda_1, -\lambda_{N_k})}{k} \abs{t-s} \leq 
\abs{\FS_k(H(t)) - \FS_k(H(s))} \leq \frac{\max(-\lambda_1, \lambda_{N_k})}{k} \abs{t -s},
\end{equation}
where $\lambda_1\leq \lambda_2\leq \cdots \leq \lambda_{N_k}$ are the eigenvalues of $A$.
\end{Lemma}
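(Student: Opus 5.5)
The plan is to write $\FS_k(H(t))$ explicitly as the logarithm of a finite sum of exponentials in $t$. Then $\frac{d}{dt}\FS_k(H(t))(x)$ is, at every point $x$, a convex combination of the numbers $-\lambda_j/k$, and both inequalities in \eqref{u(t)-u(s)} follow by integrating in $t$ at fixed $x$.

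\emph{Step 1: a matrix formula for $\FS_k$.} Fix the $h_0$-orthonormal basis $\{s_i\}$ of $H^0(X,L^k)$ used to identify $\hat{\cH}(L^k)$ with $\Herm^{++}(N_k,\C)$. At a point $x$, write $S(x)=(s_1(x),\dots,s_{N_k}(x))^T$ in a local unitary frame of $h_0^k$. I will check that for any $H\in\Herm^{++}(N_k,\C)$,
$$
\sum_j \abs{\sigma_j}^2_{h_0^k}=S(x)^*\,\overline{H}^{-1}S(x)
$$
when $\{\sigma_j\}$ is an $H$-orthonormal basis. Concretely, writing $H=QQ^*$, the sections $\sigma=Q^{-1}S$ (with the appropriate conjugation/transposition conventions) form an $H$-orthonormal basis. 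Since the right-hand side does not depend on the choice of orthonormal basis, this also shows that \eqref{eq:DefFSk} is well defined. The conjugation bookkeeping is the only fiddly point, and it does not affect the argument: up to replacing $(P,A)$ by $(\overline P,\overline A)$, which has the same eigenvalues, we get
$$
e^{k\FS_k(H(t))(x)}=v(x)^*\,e^{-tA}\,v(x),\qquad v(x):=P^{-1}S(x).
$$
Note that $v(x)\neq 0$, because $\FS_k(H)$ is finite, i.e. the sections have no common zero.

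\emph{Step 2: diagonalize.} Write $A=UDU^*$ with $U$ unitary and $D=\mathrm{diag}(\lambda_1,\dots,\lambda_{N_k})$, and set $w=U^*v(x)$. Then
$$
f_x(t):=e^{k\FS_k(H(t))(x)}=\sum_j \abs{w_j}^2e^{-t\lambda_j}>0,
$$
and
$$
\frac{d}{dt}\,\FS_k(H(t))(x)=\frac1k\,\frac{f_x'(t)}{f_x(t)}=\frac1k\sum_j p_j(t)\,(-\lambda_j),\qquad p_j(t)=\frac{\abs{w_j}^2e^{-t\lambda_j}}{f_x(t)} .
$$
The weights satisfy $p_j\geq 0$ and $\sum_j p_j=1$. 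Hence the derivative lies in $[-\lambda_{N_k}/k,\,-\lambda_1/k]$ for every $x$ and $t$. Integrating from $s$ to $t$ with $s\le t$ gives
$$
-\frac{\lambda_{N_k}}{k}(t-s)\le \FS_k(H(t))(x)-\FS_k(H(s))(x)\le -\frac{\lambda_1}{k}(t-s).
$$

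\emph{Step 3: pass to absolute values.} For the upper bound, the quantity $\abs{\FS_k(H(t))-\FS_k(H(s))}$ is at most $\max(\abs{\lambda_1},\abs{\lambda_{N_k}})(t-s)/k$. Since $\lambda_1\le\lambda_{N_k}$, this maximum equals $\max(-\lambda_1,\lambda_{N_k})$. For the lower bound there are three cases.
\begin{itemize}
\item If $\lambda_1>0$, the difference is $\le-\lambda_1(t-s)/k<0$, so its absolute value is $\ge \lambda_1(t-s)/k$. In this case $-\lambda_{N_k}<0<\lambda_1$, so $\max(\lambda_1,-\lambda_{N_k})=\lambda_1$.
\item If $\lambda_{N_k}<0$, the symmetric argument gives $\abs{\,\cdot\,}\ge-\lambda_{N_k}(t-s)/k$, and here $\max(\lambda_1,-\lambda_{N_k})=-\lambda_{N_k}$.
\item If $\lambda_1\le 0\le\lambda_{N_k}$, then $\max(\lambda_1,-\lambda_{N_k})\le 0$ and the lower bound is trivial.
\end{itemize}

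I expect no real obstacle. The only step requiring care is Step 1: getting the orthonormal-basis and conjugation conventions right so that $e^{k\FS_k}$ is exactly the quadratic form $v^*e^{-tA}v$. Once that formula is in place, the log-sum-exp derivative argument in Step 2 is immediate.
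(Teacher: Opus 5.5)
Your proof is correct and follows essentially the paper's argument: both write $\FS_k(H(t))=\frac1k\log\sum_j \abs{w_j}^2e^{-t\lambda_j}$, note that its $t$-derivative is a convex combination of the numbers $-\lambda_j/k$, and integrate in $t$. The differences are cosmetic. You derive the formula from the basis-free quadratic form $v^*e^{-tA}v$, which avoids having to pick the $H_0$-orthonormal basis adapted to $A$ as the paper does, and you spell out the sign case analysis that the paper compresses into ``we deduce''.
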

%Observe that since the action $P.H:=PHP^*$ is transitive, we can infer that any $H_0\in \Herm^{++} (N_k, \mathbb C)$ can be written as $PP^*$.
\begin{proof}
Let $\{s_j\}, j=1,...,N_k$, be a $H_0$-orthonormal basis of $\hat{\mathcal H} (L^k)$. We claim that $\{s_j e^{-\frac{\lambda_j t}{2}}\}$ is an orthonormal basis for $H(t)= P \exp (tA) P^{*}$. To see this, we first observe that $\{E_j\}$ is an orthonormal basis for $H_0=PP^*$ w.r.t. $h_0:=<\cdot,\cdot>_0$ (i.e. $<PP^\star E_j, E_k>_0=\delta_{jk}$) if and only if $\{e_j\}:=\{P^*E_j\}$ is an orthonormal basis for $<\cdot,\cdot>_0$.  \\
Now, let $\{e_j\}$ be an orthonormal basis for $<\cdot,\cdot>_0$ which diagonalize $A$, i.e. $e^{tA}e_j= e^{t\lambda_j}e_j$ then $\{e^{-{t\lambda_j}/{2}} E_j\}$ (where $E_j$ is defined so that $P^*E_j=e_j$) is an orthonormal basis for $Pe^{tA}P^*$. The above arguments applied to $E_j=s_j$ prove the claim.\\
Hence we have
\begin{equation}\label{FSkHt}
\FS_k(H(t)) = \frac{1}{k}\log\left( \sum_j \abs{s_j}_{h_0^k}^2 e^{-t\lambda_j} \right).
\end{equation}
\noindent
Let us denote
\begin{equation}\label{def-u}
u(t):= \FS_k(H(t)).
\end{equation}
We then have
$$
u'(t) = -\frac{1}{k}\frac{\sum_j \lambda_j \abs{s_j(x)}_{h_0^k}^2 e^{-t\lambda_j}}{\sum_j \abs{s_j(x)}_{h_0^k}^2 e^{-t\lambda_j}}.
$$
On the other hand, for any $1\leq j \leq N_k$, we have $\lambda_1 \leq \lambda_j \leq \lambda_{N_k}$, hence
$$
-\lambda_{N_k} e^{-t\lambda_j} \abs{s_j(x)}_{h_0^k}^2
	\leq -\lambda_{j} e^{-t\lambda_j} \abs{s_j(x)}_{h_0^k}^2
	\leq -\lambda_{1} e^{-t\lambda_j} \abs{s_j(x)}_{h_0^k}^2.
$$
Summing up the previous inequalities on $1\leq j \leq N_k$, we get
$$
-\frac{\lambda_{N_k}}{k} \leq u'(t) \leq -\frac{\lambda_1}{k},
$$
therefore, for any $s<t$, we have
$$
-\frac{\lambda_{N_k}}{k} \leq \frac{u(t) - u(s)}{t-s} \leq -\frac{\lambda_1}{k}.
$$
We deduce
\begin{equation}
\frac{\max(\lambda_1, -\lambda_{N_k})}{k} \abs{t-s} \leq \abs{u(t) - u(s)} \leq \frac{\max(-\lambda_1, \lambda_{N_k})}{k} \abs{t -s},
\end{equation}
which concludes the proof.
\end{proof}

\begin{Th}
\label{Cor:FSkInjective}
The map $\FS_k : (\hat{\cH}(L^k), \hat{d}) \rightarrow (\cH,d_p)$ is $C(k)$-Lipschitz with Lipschitz constant $C(k):= \frac{C_p}{k}\sqrt{ N_k}$, where $C_p$ is a positive constant which depends on $p$.
\end{Th}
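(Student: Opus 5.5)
Given $H_1, H_2 \in \hat{\cH}(L^k)$, I join them by the unique geodesic of the symmetric space $\Herm^{++}(N_k,\C)$. Writing $H_1 = PP^*$, this geodesic is $H(t) = P\exp(tA)P^*$ with $H(0)=H_1$ and $H(1)=H_2$. By \eqref{distancealonggeodesic},
\[
\hat{d}(H_1,H_2) = \sqrt{\tfrac{1}{N_k}\Tr(A^2)} .
\]
I set $u_t := \FS_k(H(t))$, so that $u_0 = \FS_k(H_1)$ and $u_1 = \FS_k(H_2)$, both in $\cH_\omega$.

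Lemma \ref{u(t)-u(s)-lemma} with $s=0$, $t=1$ gives the pointwise bound
\[
\abs{u_1 - u_0} \leq \frac{\max(-\lambda_1,\lambda_{N_k})}{k} = \frac{\max_j \abs{\lambda_j}}{k}.
\]
I then apply Lemma \ref{Lemma:DarvasDistanceL2normComparison}. The measures $\omega_{u_0}^n$ and $\omega_{u_1}^n$ each have total mass $V$, since they lie in the same cohomology class as $\omega^n$. Hence
\[
d_p(u_0,u_1)^p \leq \frac{C}{V}\int_X \abs{u_0-u_1}^p\left(\omega_{u_0}^n+\omega_{u_1}^n\right) \leq 2C\left(\frac{\max_j\abs{\lambda_j}}{k}\right)^p .
\]
Taking $p$-th roots gives $d_p(u_0,u_1) \leq (2C)^{1/p}\,\frac{1}{k}\max_j\abs{\lambda_j}$.

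It remains to compare the sup-norm of the spectrum with the normalized Hilbert–Schmidt norm. I use the elementary inequality
\[
\max_j\abs{\lambda_j} \leq \Big(\sum_j \lambda_j^2\Big)^{1/2} = \sqrt{\Tr(A^2)} = \sqrt{N_k}\,\sqrt{\tfrac{1}{N_k}\Tr(A^2)} = \sqrt{N_k}\,\hat{d}(H_1,H_2).
\]
Combining the two estimates yields
\[
d_p(\FS_k(H_1),\FS_k(H_2)) \leq \frac{C_p}{k}\sqrt{N_k}\,\hat{d}(H_1,H_2), \qquad C_p = (2C(p))^{1/p}.
\]

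I do not expect a serious obstacle, since every step is a direct invocation of an earlier result. The one point to check is that the eigenvalues $\lambda_j$ used in Lemma \ref{u(t)-u(s)-lemma} are the same as those entering $\Tr(A^2)$, that is, the eigenvalues of $A$ in the parametrization $H(t)=P\exp(tA)P^*$. This is exactly how both the lemma and \eqref{distancealonggeodesic} are set up, so the argument goes through. The factor $\sqrt{N_k}$ is the price of passing from the $\ell^\infty$ norm of the spectrum to the normalized $\ell^2$ norm, and it is consistent with the stated constant $C(k)$.
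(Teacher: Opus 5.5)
Your proposal is correct and is essentially the paper's own proof. You join $H_1,H_2$ by the geodesic $P\exp(tA)P^*$, use the upper bound of Lemma~\ref{u(t)-u(s)-lemma} to estimate $\abs{\FS_k(H_1)-\FS_k(H_2)}$ pointwise, feed this into Darvas' comparison (Lemma~\ref{Lemma:DarvasDistanceL2normComparison}), and finish with $\max_j\abs{\lambda_j}\leq\sqrt{\Tr(A^2)}=\sqrt{N_k}\,\hat{d}(H_1,H_2)$; your explicit remark that $\omega_{u_0}^n$ and $\omega_{u_1}^n$ both have mass $V$ is exactly what the paper silently absorbs into its constant $C_p'$.
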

\begin{Remark}
We remark that, since $N_k\sim k^n$ as $k\rightarrow +\infty$, the Lipschitz constant $C(k)$ can be chosen independently on $k$ when $n=2$. We also note that in the case of Riemann surfaces $FS_k$ is a contraction for $k>>1$.
\end{Remark}

\smallskip
We mention that Lempert \cite[Theorem 1.1]{Lempert21} proved that the map $\FS_k$ is injective.

\begin{proof}
Let $H_0 , H_1 \in \text{Herm}^{++}(N_k, \mathbb C)$. By \eqref{paramgeod}, the geodesic segment $H(t), t\in [0,1]$  joining $H_0$ and $H_1$ writes
$H(t)= P \exp (tA) P^{*}$ where 
$H_0 = P P^{*}$ and $A= \log \left( P^{-1} H_1 (P^{*})^{-1}\right)$. By \eqref{distancealonggeodesic}, the distance between $H_0$ and $H_1$ is equal to $\hat d (H_0, H_1) = \sqrt{\frac{1}{N_k} \text{Tr} (A^2)}$. \\
Denote by $u_i:=\FS_k(H_i)$. From Lemma \ref{Lemma:DarvasDistanceL2normComparison} and the right hand side of \eqref{u(t)-u(s)}
we get
\begin{align*}
d_p\left( \FS_k(H_0), \FS_k(H_1) \right)
&\leq \left(\frac{C_p}{V}\right)^{1/p}\left(\int_X \abs{u_0 - u_1}^p \left( \omega_{u_0}^n + \omega_{u_1}^n \right)\right)^{1/p} \leq C_p'  \, \frac{\max(\lambda_{N_k}, -\lambda_1)}{k}.
	\end{align*}
Observing that 
	$
\max(\lambda_{N_k}, -\lambda_1) = \max(\abs{\lambda_1}, \abs{\lambda_{N_k}}) \leq \sqrt{\Tr(A^2)} = \sqrt{N_k} \hat{d} (H_0, H_1),
$
we deduce from the previous inequality that 
$$d_p( \FS_k(H_0), \FS_k(H_1) )
\leq C_p'\frac{\sqrt{N_k}}{k} \hat{d}(H_0, H_1),$$
which concludes the proof of Theorem \ref{Cor:FSkInjective}.
\end{proof}
%As we said, the first inequality above is a consequence of the proof of Proposition \ref{Prop:ImageOfGeodesicIsQuasiGeodesic_SameSignEigenvalues} and of \eqref{eq:DistanceFormuleGeodesicCauchyInHermitianMatrices}. The 

\noindent
Let us consider $A \in \Herm(N_k,\C)$ such that $A \neq 0$ and $H(t) := P \exp (tA) P^{*}$ a geodesic ray in $\Herm ^{++}(N_k, \mathbb C)$ passing through $H_0 = P P^*$. Observe that, since $A\neq 0$, then $H(t)$ is not constant in time. The following result shows when 
$t \to \FS_k(H(t))$ is a quasi-geodesic in $\mathcal H $.

\begin{Th}
\label{Prop:ImageOfGeodesicIsQuasiGeodesic_SameSignEigenvalues}
Let $0 \neq A \in \Herm(N_k,\C)$ and $H_0 =P P^* \in \hat{\Hspace}(L^k)$. Assume $A$ is either definite or $\lambda_1<0$ and $\lambda_{N_k}>0$. Then, the image by $\FS_k$ of the geodesic $\R \ni t \mapsto H(t) := P\exp(t A)P^*$ is a quasi-geodesic in $\Hspace_\omega$. More precisely, there exist positive constants $C_k, D_k>0$ such that
$$
 C_k \hat{d}(H(t), H(s)) -D_k \leq d_p(\FS_k(H(t)),\FS_k(H(s)))   \leq C_p \frac{\sqrt{N_k}}{k} \hat{d}(H(t), H(s)).
$$
\end{Th}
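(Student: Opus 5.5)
The upper bound is exactly Theorem \ref{Cor:FSkInjective} applied to $H(s),H(t)$, so the work is the lower bound. Write $u(t)=\FS_k(H(t))=\frac1k\log\big(\sum_j |s_j|^2_{h_0^k}e^{-t\lambda_j}\big)$ as in the proof of Lemma \ref{u(t)-u(s)-lemma}. By Lemma \ref{Lemma:DarvasDistanceL2normComparison},
\[
d_p(u(t),u(s))^p\geq \frac{1}{CV}\int_X |u(t)-u(s)|^p\,\omega_{u(t)}^n .
\]
Since $\omega_{u(t)}^n$ has total mass $V$, it suffices to bound $|u(t)-u(s)|$ from below, pointwise or on a set of definite measure, by $c|t-s|-D$. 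Then Jensen (or Hölder) gives $d_p\geq c'|t-s|-D'$. Finally, \eqref{distancealonggeodesic} converts $|t-s|$ into $\hat d(H(t),H(s))$ up to the factor $\sqrt{\Tr(A^2)/N_k}$.

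\emph{Definite case.} Say $A>0$, so $\lambda_1>0$; the case $A<0$ is symmetric. Lemma \ref{u(t)-u(s)-lemma} gives $u'(t)\le -\lambda_1/k<0$ everywhere, hence $|u(t)-u(s)|\geq \frac{\lambda_1}{k}|t-s|$ pointwise on $X$. Integrating this yields the bound with $D_k=0$.

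\emph{Indefinite case,} $\lambda_1<0<\lambda_{N_k}$. Here $u$ is convex in $t$, since it is a log-sum-exp in $t$, and $u'(t,x)$ is a weighted average of $-\lambda_j/k$. I would group the sections into $S_+=\sum_{\lambda_j>0}|s_j|^2e^{-t\lambda_j}$ and $S_-=\sum_{\lambda_j<0}|s_j|^2e^{-t\lambda_j}$, plus the part for zero eigenvalues. The key observation is that for $\lambda_j<0$ one has
\[
u(t)\ \ge\ \frac1k\log|s_j|^2_{h_0^k}+\frac{|\lambda_j|}{k}\,t ,
\]
which grows linearly as $t\to+\infty$. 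For $t\to-\infty$, the analogous bound uses $\lambda_{N_k}>0$. Moreover, $u(t)\le \frac1k\log\sum_j|s_j|^2 + \frac{\max(-\lambda_1,\lambda_{N_k})}{k}|t|$.

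The plan is to show that, on a fixed compact set $K\subset X$ of positive $\omega$-measure where $s_1$ and $s_{N_k}$ don't vanish, one has $|u(t)-u(s)|\ge c|t-s|-D$. We handle cases by signs.

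If $s\le 0\le t$, then $u(t)-u(0)\geq \frac{|\lambda_1|}{k}t - D$ and $u(s)-u(0)\ge \frac{\lambda_{N_k}}{k}|s|-D$ on $K$. Convexity then forces $\max(u(t),u(s))-u(\tau)$ to be large for the minimizing time $\tau$. This is the delicate point: I want $|u(t)-u(s)|$ itself, not distances to a midpoint.

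Since $d_p$ is a metric, I will instead use the triangle inequality differently. Define $m(t)=\min_K u(t)$, or better use a functional: the Monge–Ampère energy or $\sup_X u$. Take $\phi(t)=\sup_X u(t)=\frac1k\max_x\log\sum|s_j|^2e^{-t\lambda_j}$. This is convex with asymptotic slopes $-\lambda_1/k>0$ at $+\infty$ and $-\lambda_{N_k}/k<0$ at $-\infty$ (using that the Bergman-type sum with one eigenvalue class dominates). Moreover $|\phi(t)-\phi(s)|\le \sup_X|u(t)-u(s)|$.

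However, a lower bound via $\sup$ is insufficient for $d_p$ unless I also know that $d_p$ controls $\sup$ differences. For potentials in the Fubini–Study image at fixed $k$, all $u(t)-u(s)$ are $\frac1k\log$ of ratios of sections with controlled Lipschitz behaviour. So I would use that $\frac1V\int|u(t)-u(s)|\omega_{u(t)}^n$ is bounded below by $\sup|u(t)-u(s)|$ minus a constant depending on $k$. This follows by a compactness argument on the finite-dimensional family, using the Lipschitz bound from Lemma \ref{u(t)-u(s)-lemma}.

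\emph{Main obstacle.} The difficulty is the case where $s<t$ lie on the same side of the minimum of a convex function. That case is fine because slopes are bounded away from zero away from a bounded window. The genuinely hard part is $s,t$ straddling the minimum, where $\phi(t)\approx\phi(s)$ while $|t-s|$ is large. There the lower bound must come instead from the spatial variation: where $S_-$ dominates, $u(t)-u(s)>0$ large, and where $S_+$ dominates, it is negative. I would show both regions carry measure bounded below for $\omega_{u(t)}^n$, using that the top and bottom eigen-sections are nonzero on open sets. This gives $\int|u(t)-u(s)|\omega_{u(t)}^n\gtrsim|t-s|-D_k$.
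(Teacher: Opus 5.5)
\paragraph{Verdict.} Your upper bound and your definite case are correct, and they are the paper's argument: Theorem \ref{Cor:FSkInjective} for the upper bound, and the pointwise slope bound of Lemma \ref{u(t)-u(s)-lemma} for the definite case. The indefinite case is not proved, because each of your three devices fails.

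\paragraph{The test example.} Take $X=\mathbb{P}^1$, $L=\mathcal{O}(1)$ with the Fubini--Study metric, $k=1$, orthonormal basis $s_1=cz_0$, $s_2=cz_1$, and $A=\mathrm{diag}(-1,\epsilon)$ with $0<\epsilon\le 1$. In the coordinate $z=z_1/z_0$,
\[
u(t)=\log c^2+\log\frac{e^{t}+|z|^2e^{-\epsilon t}}{1+|z|^2},\qquad \omega_{u(t)}=\phi_{(1+\epsilon)t/2}^*\,\omega,\quad \phi_r(z)=e^{-r}z .
\]
So the measures in Lemma \ref{Lemma:DarvasDistanceL2normComparison} give vanishing mass to every compact subset of $\C^*$ as $t\to\pm\infty$.

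\paragraph{First device: a fixed compact set $K$.} The $\omega^n$-measure of your $K$ is irrelevant, since the measures that matter are $\omega_{u(t)}^n$ and $\omega_{u(s)}^n$. Moreover, on such a $K$ one has $u(t)-u(s)=\frac1k\big(|\lambda_1|t-\lambda_{N_k}|s|\big)+O_K(1)$ for $s\le0\le t$. This stays bounded along the line $|\lambda_1|t=\lambda_{N_k}|s|$. For instance, with $\epsilon=1$, $|u(T)-u(-T)|\le|\log|z|^2|$.

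\paragraph{Second device: comparison with the sup.} You claim $\frac1V\int_X|u(t)-u(s)|\,\omega_{u(t)}^n\ge\sup_X|u(t)-u(s)|-C_k$. This does not follow from compactness, since $\{u(t)\}_{t\in\R}$ is not compact and everything happens as $|s|,|t|\to\infty$. It is in fact false. Take $\epsilon<1$, $s=0$ and $w=e^{-(1+\epsilon)t/2}z$. Then $-\epsilon t\le u(t)-u(0)\le-\epsilon t+\log(1+|w|^{-2})$, so $\frac1V\int_X|u(t)-u(0)|\,\omega_{u(t)}\le\epsilon t+C$, while $\sup_X|u(t)-u(0)|=t$.

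\paragraph{Third device: both sign regions carry mass.} In the straddling case, the claim that both sign regions carry $\omega_{u(t)}^n$-mass bounded below is false. With $\epsilon=1$ one has $u(T)-u(-T)=\log\frac{e^{2T}+|z|^2}{1+e^{2T}|z|^2}$. This is positive exactly on $\{|z|<1\}$, which has $\omega_{u(T)}$-mass $O(e^{-2T})$. Almost all of $\omega_{u(T)}$ sits on $\{|z|\sim e^{T}\}$, where $u(T)-u(-T)=-2T+O(1)$. Non-vanishing of $s_1,s_{N_k}$ on open sets says nothing about where the Monge--Amp\`ere mass goes.

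\paragraph{What is missing.} These examples do not contradict the theorem. Both paths are Mabuchi geodesic lines, coming from the $\C^*$-action $z\mapsto e^{-r}z$ up to adding a multiple of $t$, so $d_p(u(t),u(s))$ is linear in $|t-s|$. The lower bound, however, comes from the moving mass. Where the mass goes is governed by how $\Phi_k(X)$ degenerates under the one-parameter subgroup generated by $A$, not by the zero sets of individual sections. You need a uniform statement: for all $s<t$, at least one of $\omega_{u(t)}^n,\omega_{u(s)}^n$ gives definite mass to $\{|u(t)-u(s)|\ge c|t-s|-D\}$. Equivalently, a lower bound on $\big|\int_X(u(t)-u(s))\,\omega_{u(t)}^n\big|$ or on $\big|\int_X(u(t)-u(s))\,\omega_{u(s)}^n\big|$ would do. Nothing in your proposal supplies this.

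\paragraph{The paper's proof does not settle this case either.} You were right to single out the straddling case. The paper's cases (i)--(v) omit $s<-\tau_0<\tau_0<t$. Its Jensen step against the combined measure $\omega_{u(t)}^n+\omega_{u(s)}^n$ also loses everything. In the $\epsilon=1$ example, the symmetries $u(t)(1/z)=u(-t)(z)$ and $u(t+r)-u(s+r)=(u(t)-u(s))\circ\phi_r$ force
\[
\int_X(u(t)-u(s))\,(\omega_{u(t)}+\omega_{u(s)})=0 \quad\text{for all } s,t .
\]
So the uniform bound $\int_Xu'(\tau)(\omega_{u(t)}^n+\omega_{u(s)}^n)\ge2\alpha$ for $\tau\ge\tau_0$, used there even in case (i), cannot hold. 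The missing step therefore cannot be taken from the paper.
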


\begin{Remark}\label{ex}
The assumption on the eigenvalues of $A$ is optimal as the following example shows. 
First, we observe that up to work with a large power of $L$ we can suppose that $L$ is very ample and that $H^0(X, L)$ has a basis $(s_1,...,s_N)$ so that $s_1,...,s_p$ have no common zeros for some $p<N$. Now, given $H(t)=diag(1,...,1,e^{t},...,e^{t})$, ie. $\lambda _1=\cdots:\lambda_p =0 <1= \lambda_{p+1} =\cdots=\lambda _N$, we can infer that
$$\FS_1(H(t))=\log\left(\sum_{i\le p} |s_i|^2+e^{-t}\sum_{i>p}|s_j|^2 \right) $$ decreases at $t$ goes to $+\infty$ to $\phi=\log \sum_{i\le p} |s_i|^2$. In particular $d_p(\FS_1(H(t)), \FS_1(H(0)))$ is bounded in $t$. On the other side, $\hat{d}(H(t),H(0))= \frac{1}{\sqrt{N}} t$. This implies that we can not find  uniform constants (in $t$) $A, B>0$ so that $\hat{d}(H(t),H(0)) \leq A d_p(\FS_1(H(t)),\FS_1(H(0)))+B$.
\smallskip

We thank Sebastien Boucksom and Mattias Jonsson for sharing this example.
\end{Remark}

\begin{proof}
By Theorem \ref{Cor:FSkInjective}, the map $FS_k$ is Lipschitz, thus it remains to show the left hand side inequality of the statement. Let us denote by
$ \lambda_1 \leq \cdots \leq   \lambda_{N_k}$ the eigenvalues of $A$. Observe that if $\lambda_1>0$ or $\lambda_{N_k}<0$, then the conclusion follows straightforwardly by Lemma \ref{u(t)-u(s)-lemma}. 
\smallskip
In what follows we consider the case $\lambda_1<0$ and $\lambda_{N_k}>0$. Let $\left\lbrace s_j \right\rbrace$ of $H^0(X,L^k)$ be a $H_0$-orthonormal basis.
As we already saw in the proof of Lemma \ref{u(t)-u(s)-lemma} we have
$$
u(t):=\FS_k(H(t)) = \frac{1}{k}\log\left( \sum_j \abs{s_j}_{h_0^k}^2 e^{-t\lambda_j} \right).
$$
To simplify notation, in the following we simply write $\abs{s_j}$ instead of $\abs{s_j}_{h_0^k}^2$. Now, for $\tau\in \R^+$
\begin{eqnarray*}
u'(\tau) &=& \frac{1}{k}\frac{\sum_{\lambda_j <0} |\lambda_j| \abs{s_j}^2 e^{-\tau\lambda_j} - \sum_{\lambda_i>0}|\lambda_i| \abs{s_i}^2 e^{-\tau\lambda_i} }{ \sum_{\lambda_\ell<0} \abs{s_\ell}^2 e^{-\tau \lambda_\ell} +  \sum_{\lambda_\ell\geq 0} \abs{s_\ell}^2 e^{-\tau \lambda_\ell} }:= \frac{1}{k} \frac{A(\tau)-B(\tau)}{C(\tau)+D(\tau)}.
\end{eqnarray*}
Observe that  $D(\tau)$ is bounded above independently of $\tau$ since $D(\tau) \leq \sum_\ell \abs{s_\ell}^2\leq C_1$. Also,
\begin{eqnarray*}
 A(\tau)-B(\tau)
 &=& \sum_{\lambda_j<0} e^{-\tau \lambda_j} \left( |\lambda_j| |s_j|^2- \sum_{\lambda_i>0}   |\lambda_i| |s_i|^2 e^{-\tau (\lambda_i+|\lambda_j|)} \right)\\
  &\geq & \sum_{\lambda_j<0} e^{-\tau \lambda_j} \left( |\lambda_j| |s_j|^2- h(\tau) \right),
 \end{eqnarray*}
 where $h:\R^+\rightarrow\R^+ $ smooth function going to zero as $\tau\rightarrow +\infty$.
%Observe that  $D(\tau)$ is bounded above independently of $\tau$ since $D(\tau) \leq \sum_\ell \abs{s_\ell}^2\leq C_1$. Hence 
%$$|u'(\tau)|^2 \geq \frac{1}{k^2} \frac{A^2(\tau)-2A(\tau) B(\tau)}{2C(\tau)^2+2C_1} \geq \frac{1}{k^2} %\frac{A^2(\tau)-2A(\tau) B(\tau)}{2C_1^2 \left( \sum_{\lambda_\ell<0 }e^{-2\tau \lambda_\ell}+1 \right)}  ,$$
%since $C(\tau) \leq C_1 \sum_{\lambda_\ell<0 }e^{-\tau \lambda_\ell}$. Also,
%\begin{eqnarray*}
% A^2(\tau)-2A(\tau) B(\tau) &\geq & \sum_{\lambda_j<0}  |\lambda_j|^2 \abs{s_j}^4 e^{-2\tau\lambda_j} -2 \sum_{\lambda_j<0,\lambda_i>0}  |\lambda_j| |\lambda_i| \abs{s_j}^2 \abs{s_i}^2 e^{-\tau(\lambda_j+\lambda_i)}\\
% &=& \sum_{\lambda_j<0} e^{-2\tau \lambda_j} \left( |\lambda_j|^2 |s_j|^4-2 \sum_{\lambda_i>0}  |\lambda_j|  |\lambda_i| |s_j|^2 |s_i|^2 e^{-\tau (\lambda_i+|\lambda_j|)} \right)\\
% &\geq & \sum_{\lambda_j<0} e^{-2\tau \lambda_j} \left( |\lambda_j|^2 |s_j|^4-2C_1^2 \sum_{\lambda_i>0}  |\lambda_j|  |\lambda_i| e^{-\tau (\lambda_i+|\lambda_j|)} \right)\\
 % &\geq & \sum_{\lambda_j<0} e^{-2\tau \lambda_j} \left( |\lambda_j|^2 |s_j|^4- o(1) \right).
 %\end{eqnarray*}
\noindent Combining all the above we get that for any $t\in \R$
 \begin{eqnarray*}
  \int_X u'(\tau) \omega_{u(t)}^n & \geq & \frac{1}{k} \frac{ \sum_{\lambda_j<0} e^{-\tau \lambda_j}  \left(\int_X  |\lambda_j| |s_j|^2 \omega_{u(t)}^n -h(\tau)\right) }{C_1 \left( \sum_{\lambda_\ell<0 }e^{-\tau \lambda_\ell}+1 \right)}.
    \end{eqnarray*}
It follows that $$\liminf_{\tau\rightarrow +\infty}   \int_X u'(\tau) \omega_{u(t)}^n  \geq  \frac{1}{C_1k} \inf_{\lambda_j<0}  \int_X |\lambda_j||s_j|^2 \omega_{u(t)}^n,$$
where the infimum is strictly positive since $\{s_j\}$ is an orthonormal basis, in particular $s_j$ can not be identically zero. 
By definition of liminf, there exists $\tau_0$ and $\alpha>0$ such that for any $s\in \R$, $\int_X u'(\tau)\, \omega_{u(t)}^n  \geq \alpha>0$ for $\tau \geq \tau_0$. The same arguments will give that $ \int_X u'(\tau)\, \omega_{u(s)}^n  \geq \beta>0$ for $\tau \geq \tau_0$. Up to relabelling, we can assume that $\alpha=\min (\alpha, \beta)$.
\smallskip

For $\tau\in \R^-$, the same arguments, switching the role of positive and negative eigenvalues, will ensure that there exists $\tau'_0, \alpha'>0$ such that $\int_X |u'(\tau)| \, \omega_{u(t)}^n+  \omega_{u(s)}^n \geq \alpha'>0$ for $\tau \leq -\tau'_0$. Up to relabelling again, we can assume that $\tau_0=\max (\tau_0, \tau_0')$ and $\alpha=\min (\alpha, \alpha')$. We stress that in what follows we need $\alpha>0$ which means that we need to restrict to the case $\lambda_1<0$ and $\lambda_{N_k}>0$.
\smallskip
Thanks to the fundamental theorem of calculus we know that for any $s<t \in \R$, $u(t)-u(s)=\int_s^tu'(\tau) d\tau$. Then, using Lemma \ref{Lemma:DarvasDistanceL2normComparison} and Jensen inequality we have
\begin{eqnarray*}
d_p(u(t), u(s))^p &\geq & C_p^{-1} \int_X \left|\int_s^t u'(\tau)  d\tau \right|^p\omega_{u(t)}^n+\omega_{u(s)}^n\\
&\geq & C_p^{-1}\left| \int_s^t \left(\int_X u'(\tau) \, \omega_{u(t)}^n+\omega_{u(s)}^n \right)d\tau \right|^p
\end{eqnarray*}
We now distinguish five cases: 
\begin{itemize}
\item[(i)] $s,t>\tau_0$, 
\item[(ii)] $s,t\in [-\tau_0,\tau_0]$,
\item[(iii)] $s\in [-\tau_0,\tau_0], t >\tau_0$,
\item[(iv)] $s<-\tau_0, t \in [-\tau_0,\tau_0]$,
\item[(v)] $s,t<-\tau_0$.
\end{itemize}
When $s,t>\tau_0$ 
$$d_p(u(t), u(s))^p \geq  C_p^{-1} (2\alpha)^p |t-s|^p, $$ hence
$$d_p(u(t), u(s)) \geq  \frac{\alpha}{C_p'} (t-s) = \frac{\alpha}{{C_p'}} \sqrt{\frac{N_k}{\sum_i \lambda_i^2}} \,\hat{d}(H(t), H(s)). $$
When $s,t\in [-\tau_0,\tau_0]$ then $t-s\leq 2\tau_0$, hence
$$ d_p(u(t), u(s)) \geq 0\geq (t-s)-2\tau_0= \sqrt{\frac{N_k}{\sum_i \lambda_i^2}} \,\hat{d}(H(t), H(s))-2\tau_0,$$
We now discuss the case (iii). We use the linearity of the integral and observe that 
$$
\int_s^{\tau_0} \left(\int_X u'(\tau) \, \omega_{u(t)}^n+\omega_{u(s)}^n \right)d\tau  + \int_{\tau_0}^t  \left(\int_X u'(\tau) \, \omega_{u(t)}^n+\omega_{u(s)}^n \right)d\tau
$$
\begin{eqnarray*}
&\geq & -2C_0 (\tau_0-s) +2\alpha (t-\tau_0)= 2\alpha (t-s) -(2\alpha+2C_0) (\tau_0-s)\\
&\geq & 2\alpha (t-s) -(2\alpha+2C_0) \tau_0,
\end{eqnarray*}
where the first inequality follows from the fact that $|u'(\tau)|\leq C_0$ on the compact interval $[-\tau_0, \tau_0]$, since $u'$ is a continuous function. 
We consider two sub-cases. Let $a_p=2^{p+1} (2^p-1)$.
When $\alpha(t-s) \leq a_p\tau_0(\alpha+C_0)$ we can conclude easily since
$$ d_p(u(t), u(s)) \geq 0\geq \alpha(t-s)- a_p\tau_0(\alpha+C_0) = \alpha\sqrt{\frac{N_k}{\sum_i \lambda_i^2}} \,\hat{d}(H(t), H(s))-a_p\tau_0(\alpha+C_0). $$
When $\alpha(t-s) > a_p\tau_0(\alpha+C_0)$, we observe that for each $j=0,\cdots, p-1$ we have $$\frac{c_{j,p}}{2}[\alpha (t-s)]^{p-j} > \binom{p}{j}[(\alpha+C_0) \tau_0]^{b-j},$$
where $c_{j,p}:=\frac{2^j}{2^p-1}$. Hence
\begin{eqnarray*}
d_p(u(t), u(s))^p &\geq &  C_p^{-1} ( \alpha (t-s) -(\alpha+C_0) \tau_0)^p \\
&\geq &C_p^{-1} \left( \sum_{j=0}^{p-1} c_{j,p} \alpha^p (t-s)^p+ \sum_{j=0}^{p-1} \binom{p}{j}(-1)^j \alpha^j (t-s)^j(\tau_0(\alpha+C_0))^j\right)\\
&\geq & {C'}_p^{-1}  \alpha^p (t-s)^p. 
\end{eqnarray*}
Thus $$d_p(u(t), u(s))\geq {C''}_p^{-1} \alpha\sqrt{\frac{N_k}{\sum_i \lambda_i^2}} \,\hat{d}(H(t), H(s)).$$
We can finally deduce that there exist positive constants $C_k=C_k(A, N_k,n, \alpha,p), D_k=D_k(\alpha,p, \tau_0)$ such that
$$d_p(u(t), u(s))\geq C_k \hat{d}(H(t), H(s)) -D_k.$$
The cases (iv) and (v) follow exactly as the cases in (iii) and (i), respectively. The proof is complete.
\end{proof}

\section{Quantization in the semi-positive case}
\label{Sec:PotentialsWithSemiPositiveForm}
Let $(X,\omega)$ be a compact K\"ahler manifold of complex dimension $n$ and $\theta$ be a real and closed $(1,1)$-form which is semi-positive and big.
In this section we describe an approach to give a \emph{quantization} result for the geodesics in the space 
$$\cH_\theta :=\{ u\in \PSH (X, \theta)\cap L^\infty(X) \cap C^\infty(\Amp(\theta )) \; :\; \theta_u=\theta+dd^c u>0 \; {\rm{in}} \; \Amp(\theta )\}.$$

It was proved in \cite{di2018geometry} that $\cH_\theta$ is a metric space. For $\varepsilon>0$ let $\omega_\varepsilon:=\theta+\varepsilon \omega>0$, and $d_{p, \omega_\varepsilon}$ denote the metric on $\overline{\cH_{\omega_\varepsilon}}\supset \cH_\theta$.
 In \cite{di2018geometry}, the authors proved that for any $u_0, u_1\in \cH_\theta$, $\lim_{\varepsilon\rightarrow} d_{p,\omega_\varepsilon}(u_0, u_1)$ exists and defines a metric, denoted by $d_p$ as well. 
 Moreover, for $t=0,1$, one has an explicit expression of the distance:
 $$d_p(u_0, u_1)= \left(\frac{1}{V}\int_X |\dot{u}_t|^p \theta^n_{u_t}\right)^{1/p},$$
 where $t\rightarrow u_t$ is the geodesic joining $u_0$ and $u_1$ and $V:=\int_X \theta^n > 0$.\\
 We recall that given $u_0, u_1 \in \Hspace_\theta$, the \emph{weak geodesic} from $u_0$ to $u_1$ is the path $t\mapsto u_t$ defined as follows: for any $x\in X$ and $z\in S := \left\lbrace 0<\Rea(z)<1 \right\rbrace$, setting $t= \Rea (z) \in [0,1]$, we define
\begin{equation}
\label{eq:DefGeodesicTheta}
u_t(x) := U(x, \Rea(z)),
\end{equation}
where $U$ is defined as the supremum:
\begin{align}
\label{eq:DefGeodesicSupTheta}
U(x,z) = \sup \left\lbrace V \in \PSH(X\times S,  \pi_1^*{\theta}) \, \Bigg\vert \, \limsup_{\Rea(z) \rightarrow 0,1}  V(x,z) \leq u_{0,1}(x) \right\rbrace.
\end{align}
Here $\pi_1: X\times S\rightarrow X$ is the projection on the first factor.

\begin{Prop}[{\cite[Proposition 1.4]{di2018geometry}}]
\label{Prop:RegularityWeakGeodesicTheta}
Let $U$ be defined as in \eqref{eq:DefGeodesicSupTheta}. Then $U\in \PSH(X\times S, \pi_1^*{\theta})\cap L^\infty (X\times S)$. In addition $U$ satisfies, for any $(x,z,z') \in X \times S \times S$:
$$
\abs{U(x,z) - U(x,z')} \leq \left( \sup_X(u_0 - u_1) \right) \abs{\Rea(z) - \Rea(z')}.
$$
Finally, $U$ is continuous in $\Amp(\theta ) \times \overline{S}$ and satisfies:
\begin{equation}
\label{eq:MA-equation-Geodesic-theta}
\begin{cases}
U\vert_{\left\lbrace\Rea(z) = 0\right\rbrace} = u_0 \; ; \quad U\vert_{\left\lbrace\Rea(z) = 1\right\rbrace} = u_1 \\
\left(\pi_1^*\theta + dd^c_{(x,z)} U\right)^{n+1} = 0,
\end{cases}
\end{equation}
and it is the unique bounded $\theta$-plurisubharmonic solution to the system of equations~\eqref{eq:MA-equation-Geodesic-theta}.
\end{Prop}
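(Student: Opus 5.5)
This is a cited regularity result, so the plan is to recover it from the Perron envelope \eqref{eq:DefGeodesicSupTheta}, following the standard argument of Berndtsson for the Kähler case, adapted to the big semi-positive setting.

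\textbf{Step 1: boundedness and the Lipschitz bound.} Set $M:=\sup_X(u_0-u_1)$ and $m:=\inf_X(u_0-u_1)$, both finite because $u_0,u_1\in\cH_\theta$ are bounded. Consider the barrier
\[
V_0(x,z):=\max\bigl(u_0(x)-M\Rea(z),\,u_1(x)-M(1-\Rea(z))\bigr).
\]
It is $\pi_1^*\theta$-psh, since $\Rea(z)$ is pluriharmonic and a maximum of psh functions is psh. Its boundary values are at most $u_0$ and $u_1$. Hence $V_0\le U$, so $U$ is bounded below. For the upper bound, any competitor $V$ is subharmonic in $z$ for fixed $x$, because $\theta$ has no $z$-component. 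Comparing with the harmonic function $(1-\Rea z)u_0(x)+\Rea z\,u_1(x)$ in the strip gives $U\le (1-t)u_0+tu_1$. Next, the usc regularization $U^*$ is also a competitor, so $U=U^*$ and $U$ is $\pi_1^*\theta$-psh. By $S^1$-invariance ($z\mapsto z+is$) of the problem, $U$ depends only on $t=\Rea z$ and is convex in $t$. Convexity together with $V_0\le U\le$ the linear interpolation yields one-sided slope bounds at $t=0,1$: $\dot U(0^+)\ge -M$ and $\dot U(1^-)\le -m\le M$. Monotonicity of the slopes then gives the Lipschitz bound, after carefully matching the constant with $\sup_X(u_0-u_1)$ (possibly also using $\sup_X(u_1-u_0)$ symmetrically).

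\textbf{Step 2: boundary values and continuity on $\Amp(\theta)\times\overline S$.} The two barriers squeeze $U$ onto $u_0$ at $t=0$ and onto $u_1$ at $t=1$, uniformly on $X$. Continuity in $t$ then follows from the Lipschitz bound. Continuity in $x$ on $\Amp(\theta)$ is the delicate part. I would use the approximation $\omega_\varepsilon=\theta+\varepsilon\omega$. The Kähler geodesics $U_\varepsilon$ (Chen / Chu–Tosatti–Weinkove, $C^{1,1}$) decrease as $\varepsilon\downarrow0$, to a $\pi_1^*\theta$-psh function $\tilde U\ge U$. Since $\tilde U$ is itself a competitor, we get $U=\tilde U$, so $U$ is usc as a decreasing limit. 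For lower semicontinuity on $\Amp(\theta)$, I would fix $\phi\in\PSH(X,\theta)$ with analytic singularities that is smooth on $\Amp(\theta)$ and satisfies $\theta_\phi\ge\delta\omega$. I would then show $U_\varepsilon-C\varepsilon(\text{stuff})+\varepsilon\phi$-type corrections give locally uniform convergence on $\Amp(\theta)$. This Demailly/Di Nezza–Lu style argument, with uniform $C^0$ estimates for $U_\varepsilon$ on compacts of $\Amp(\theta)$, is the expected main obstacle.

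\textbf{Step 3: Monge–Ampère equation and uniqueness.} By Bedford–Taylor theory, $(\pi_1^*\theta+dd^cU)^{n+1}=0$ on $\Amp(\theta)\times S$, since $U$ is maximal there: on any small ball we can replace $U$ by the Perron–Bremermann solution, which stays a competitor and so is at most $U$. Because $U$ is bounded, its non-pluripolar Monge–Ampère measure puts no mass on the analytic set $X\setminus\Amp(\theta)$, so the equation holds globally. For uniqueness, let $W$ be another bounded solution with the same boundary data. Then $W$ is a competitor, so $W\le U$. For the reverse inequality, apply the domination/comparison principle for bounded $\theta$-psh functions on $X\times S$ (with the usual $\varepsilon\Rea z(1-\Rea z)$-type perturbation). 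Since $W$ is maximal and $U\le W$ on the boundary, this gives $U\le W$.
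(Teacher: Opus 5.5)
The paper gives no argument for this statement; it is quoted from \cite[Proposition 1.4]{di2018geometry}. Your outline therefore has to stand on its own, and it has two genuine gaps.

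The main gap is Step 2: you do not prove continuity on $\Amp(\theta)\times\overline S$, and the tool you invoke does not apply. The Chen and Chu--Tosatti--Weinkove regularity results require smooth boundary data on $X$. Here $u_0,u_1\in\cH_\theta$ are only bounded on $X$ and smooth on $\Amp(\theta)$, so nothing tells you that $U_\varepsilon$ is continuous. A decreasing limit $U=\lim U_\varepsilon$ only gives upper semicontinuity.

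In fact, the estimate you single out as the main obstacle is short. Let $\phi\le 0$ be $\theta$-psh with analytic singularities, smooth on $\Amp(\theta)$, with $\theta+dd^c\phi\ge\delta\omega$, and set $\lambda:=\varepsilon/(\varepsilon+\delta)$ and $C\ge\max_i\sup_X|u_i|$. Then $(1-\lambda)U_\varepsilon+\lambda\phi-C\lambda$ is $\pi_1^*\theta$-psh with boundary values $\le u_0,u_1$, hence it is $\le U$. This gives $0\le U_\varepsilon-U\le C_K\varepsilon$ on $K\times\overline S$ for every compact $K\subset\Amp(\theta)$.

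What is really missing is continuity of $U_\varepsilon$. It holds if $u_0,u_1\in\mathcal C^0(X)$: approximate them uniformly by smooth strictly $\omega_\varepsilon$-psh functions, use Chen's $\mathcal C^{1,\overline 1}$ geodesics, and use the sup-norm $1$-Lipschitz dependence of the envelope on the data. That hypothesis holds in the Sasaki application, where $u_i$ is the pull-back $\rho^*$ of a smooth orbifold potential. It is not part of the definition of $\cH_\theta$, however, so you must add it or supply another argument.

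The second gap is the Lipschitz constant in Step 1. Take $M=\sup_X(u_0-u_1)$. Your barrier $\max\bigl(u_0-M\Rea z,\,u_1-M(1-\Rea z)\bigr)$ equals $\max(u_0,u_1-M)$ at $\Rea z=0$, which exceeds $u_0$ wherever $u_1-u_0>M$. Also, $-m\le M$ is false in general.

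No careful matching can fix this, because the inequality as printed is false. For $u_1=u_0+c$ with $c>0$ one has $U=u_0+c\Rea z$, which has slope $c$, while $\sup_X(u_0-u_1)=-c<0$. Instead, use the two one-sided barriers $u_0-M\Rea z$ and $u_1-M'(1-\Rea z)$ with $M'=\sup_X(u_1-u_0)$; each is admissible. Convexity then gives $-M\le\partial_t U\le M'$, i.e.\ the correct constant $\sup_X|u_0-u_1|$.

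Two smaller points. First, your comparison with $(1-\Rea z)u_0+\Rea z\,u_1$ on the \emph{unbounded} strip needs a growth condition. Adding $c\,\Rea\sin(\pi z)$, which is harmonic and vanishes on $\partial S$, to a competitor preserves the boundary conditions pointwise, so the literal supremum is $+\infty$. Restrict to competitors bounded above; then Phragm\'en--Lindel\"of applies, and this is also what makes your local balayage in Step 3 an admissible competitor.

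Second, in Step 3 a perturbation of the type $\varepsilon\Rea z(1-\Rea z)$ only adds positivity in the $z$-direction. That is not enough for the comparison principle to force $\{W<\cdot\}=\emptyset$ when $\pi_1^*\theta$ degenerates. Compare $W$ instead with $(1-\varepsilon)U+\varepsilon(\phi+(\Rea z)^2-\Rea z)-C\varepsilon$, whose curvature dominates a K\"ahler form on $\Amp(\theta)\times S$. Then $W$ dominates it almost everywhere, hence everywhere, and you can let $\varepsilon\to0$.
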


In particular, for any $x\in X$, the map $t\mapsto u_t(x)$ defined by \eqref{eq:DefGeodesicTheta} is Lipschitz continuous. The Lipschitz constant being $ \sup_X(u_0 - u_1)$, it is uniform in $x\in X$.

\medskip
Moreover, the geodesic $ U\in \PSH(X\times S, \pi_1^*{\theta})\cap L^\infty (X\times S)$ is the decreasing limit of geodesics (joining $u_0, u_1$) with respect to $\omega_\varepsilon$.
More precisely, if we consider
$$
U^\varepsilon := \sup \left\lbrace V \in \PSH(X\times S,  \pi_1^*{\omega_\varepsilon}) \; \vert \; \limsup_{\Rea(z) \rightarrow 0,1}  V(x,z) \leq u_{0,1}(x) \right\rbrace,
$$
then we have:

\begin{Prop}
The sequence $\{U^\varepsilon\}$ is decreasing. In addition:
\begin{equation}
U^\varepsilon \underset{\varepsilon\rightarrow 0}{\longrightarrow} U \quad \text{uniformly on } K\times \overline{S},
\end{equation}
for any compact subset $K \subset \Amp( \theta )$.
\end{Prop}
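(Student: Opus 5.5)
The plan is to prove monotonicity first, directly from the envelope definitions, and then get uniform convergence on $K\times\overline{S}$ for compact $K\subset\Amp(\theta)$ by building a single explicit competitor for $U$.

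\textbf{Monotonicity.} If $\varepsilon'<\varepsilon$, then $\omega_{\varepsilon'}\le\omega_\varepsilon$. Hence every $\pi_1^*\omega_{\varepsilon'}$-psh function is $\pi_1^*\omega_\varepsilon$-psh, and the boundary constraints are identical. The family defining $U^{\varepsilon'}$ is therefore contained in the family defining $U^\varepsilon$, so $U^{\varepsilon'}\le U^\varepsilon$. The same argument with $\theta\le\omega_\varepsilon$ gives $U\le U^\varepsilon$ for every $\varepsilon$. So $U^\varepsilon$ decreases as $\varepsilon\downarrow 0$ to some limit $\tilde U\ge U$.

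\textbf{Convergence.} Rather than identify $\tilde U$ abstractly, I will show that $U^\varepsilon - C\varepsilon^{a}$ plus a correction is a competitor for $U$ on $\Amp(\theta)$. Since $\theta$ is big, fix $\psi\in\PSH(X,\theta)$ with analytic singularities, smooth on $\Amp(\theta)$, with $\psi\le 0$ and $\theta+dd^c\psi\ge \delta\omega$ for some $\delta>0$. Given $V$ in the family for $U^\varepsilon$, consider the convex combination
$$W:=(1-\tfrac{\varepsilon}{\delta})V+\tfrac{\varepsilon}{\delta}\psi - c\varepsilon.$$
It satisfies
$$\pi_1^*\theta+dd^cW=(1-\tfrac{\varepsilon}{\delta})(\pi_1^*\theta+dd^cV)+\tfrac{\varepsilon}{\delta}(\theta+dd^c\psi)\ge -(1-\tfrac{\varepsilon}{\delta})\varepsilon\omega+\varepsilon\omega\ge0,$$
using $\pi_1^*\theta+dd^cV\ge-\varepsilon\omega$. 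So $W$ is $\pi_1^*\theta$-psh. On the boundary, $W\le (1-\frac{\varepsilon}{\delta})u_{0,1}-c\varepsilon\le u_{0,1}$, since $\psi\le0$ and $u_0,u_1$ are bounded; here we choose $c\ge \sup|u_i|/\delta$. Hence $W\le U$, and taking the sup over $V$ gives
$$(1-\tfrac{\varepsilon}{\delta})U^\varepsilon+\tfrac{\varepsilon}{\delta}\psi-c\varepsilon\le U\le U^\varepsilon .$$

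This yields the explicit estimate
$$0\le U^\varepsilon-U\le \tfrac{\varepsilon}{\delta}\bigl(|U^\varepsilon|+|\psi|\bigr)+c\varepsilon .$$
On $K\times\overline{S}$, $\psi$ is bounded because $K\subset\Amp(\theta)$ is compact, and $U^\varepsilon$ is uniformly bounded. The uniform bound on $U^\varepsilon$ comes from $U\le U^\varepsilon\le U^{1}$, and $U^1$ is bounded by the usual barrier $\max(u_0,u_1)$ together with Proposition~\ref{Prop:RegularityWeakGeodesicTheta}'s boundedness of $U$. The right-hand side is therefore $O(\varepsilon)$ uniformly on $K\times\overline{S}$, which proves the proposition.

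\textbf{Main obstacle.} The key step is producing $\psi$. It must be $\theta$-psh, $\le 0$, satisfy $\theta+dd^c\psi\ge\delta\omega$ as currents, and be locally bounded on $\Amp(\theta)$. This is exactly Boucksom's characterization of the ample locus (via analytic singularities and Demailly regularization), and I will take it as given from the definition of $\Amp(\theta)$. One subtlety is that a single $\psi$ has to be smooth on all of $K$. The definition only provides, for each point $x$, a function smooth near $x$. This is handled by taking a finite maximum over a cover of $K$ (a regularized max if smoothness is needed), which remains $\theta$-psh, keeps the Kähler current lower bound, and is bounded on $K$.
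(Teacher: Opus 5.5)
Your proof is correct, but it takes a different route from the paper.

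\textbf{The paper's route.} The paper's argument is two lines. The decreasing pointwise convergence $U^\varepsilon\searrow U$ on $X\times\overline{S}$ is quoted from \cite[Proposition~1.6]{di2018geometry}. Uniformity on $K\times\overline{S}$ then comes from Dini's theorem, using the continuity of $U$ and $U^\varepsilon$ on $\Amp(\theta)\times\overline{S}$ supplied by Proposition~\ref{Prop:RegularityWeakGeodesicTheta}. Implicitly it also uses the $\mathrm{Im}(z)$-invariance of the envelopes to reduce to the compact set $K\times[0,1]$.

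\textbf{Your route.} You use a Kähler current $\theta+dd^c\psi\ge\delta\omega$ to push each $\pi_1^*\omega_\varepsilon$-competitor $V$ into the family defining $U$, via $(1-\varepsilon/\delta)V+(\varepsilon/\delta)\psi-c\varepsilon$. This needs $\varepsilon<\delta$ so that $1-\varepsilon/\delta>0$; you should say so explicitly. Otherwise your curvature computation and boundary check are right. The inequality identifies the limit on $\{\psi>-\infty\}$ and gives the quantitative bound $0\le U^\varepsilon-U\le C\varepsilon(1+|\psi|)$ on $X\times S$. That is an explicit $O(\varepsilon)$ rate on each compact subset of $\Amp(\theta)$, whereas Dini gives no rate.

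\textbf{What each buys.} The paper's route is shorter given the citations and gives decreasing convergence on all of $X\times\overline{S}$. Its uniformity, however, rests on a nontrivial regularity input: continuity of the envelopes on the ample locus. Your route needs neither that continuity, nor compactness for Dini, nor \cite[Proposition~1.6]{di2018geometry}. Its only input is the existence of $\psi$, which is Boucksom's description of $\Amp(\theta)$ as the smooth locus of a Kähler current with analytic singularities. Your finite-max construction over a cover of $K$ also suffices, since you only use that $\psi$ is bounded on $K$.

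\textbf{On the definition of $\Amp(\theta)$.} Your Kähler-current reading of the ample locus is the right one. The definition in the paper's introduction omits the requirement that $\phi$ be a Kähler current. Taken literally, it would give $\Amp(\theta)=X$ for every semi-positive $\theta$, since $\phi=0$ would qualify.
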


\begin{proof}
The decreasing convergence on $X\times \overline{S}$ follows from \cite[Proposition~1.6]{di2018geometry}. Let us prove that the convergence is uniform on any compact set $K\subset \Amp( \theta )$. Proposition \ref{Prop:RegularityWeakGeodesicTheta} asserts that $U$ and $U^\varepsilon$ are continuous on $K\times \overline{S}$. Since the sequence $\{U^\varepsilon\}$ is decreasingly converging to $U$, Dini's theorem gives uniform convergence on $K\times \overline{S}$.
\end{proof}

As before, for any $t\in [0,1]$ and $z \in \overline{S}$ such that $\Rea(z) = t$ we set:
\begin{equation}
\label{eq:DefPathOfStep1}
u_t^\varepsilon(\cdot) := U^\varepsilon(\cdot, z).
\end{equation}
It follows from the previous proposition that for any compact subset $K\subset \Amp( \theta)$, we have the decreasing convergence of the weak $\omega_\varepsilon$-geodesics $t\mapsto u_t^\varepsilon$ towards the weak $\theta$-geodesic $t\mapsto u_t$:
\begin{equation}
\label{eq:QuantizationStep1}
\sup_{(x,t) \in K\times \overline{S}} \abs{u^\varepsilon_t(x) - u_t(x)}     \underset{\varepsilon\rightarrow 0}{\longrightarrow} 0.
\end{equation}

We now assume that the forms $\theta$ and $\omega$ are both integral. Recall that a $2$-form is said to be \emph{integral} if its cohomology class lies in the image $H^2(X,\Z)_b$ of the natural map $H^2(M,\Z) \rightarrow H^2_{dR}(M)$. We then give an approximation of the weak geodesics segments defined in \eqref{eq:DefGeodesicTheta} by paths coming from a finite dimensional space.
\smallskip

Let $t\mapsto u_t^\ell$ be the path defined in \eqref{eq:DefPathOfStep1} with $\varepsilon=1/\ell$. Since $\omega_\ell$ is Kähler, we can consider smooth approximations as in \cite[Theorem 1]{blocki_regularization_2007}:
let $u_{0,j}$ (resp. $u_{1,j}$) be a decreasing sequence of smooth strictly $\omega_\ell$-plurisubharmonic functions decreasing towards $u_0$ (resp. $u_1$)
and let $u^\ell_{t,j}$ be the weak geodesic between $u_{0,j}$ and $u_{1,j}$. Recall that the weak geodesics $u_t^\ell$ and $u_{t,j}^\ell$ are defined so that $u^\ell_{\Rea(z)}(x)=U^\ell(x,z)$ and $u^\ell_{\Rea(z),j}(x)=U^\ell_j(x,z)$ where
$$
U^\ell := \sup \left\lbrace V \in \PSH(X\times S,  \pi_1^*{\omega_\ell}) \; \vert \; \limsup_{\Rea(z) \rightarrow 0,1}  V(x,z) \leq u_{0,1}(x) \right\rbrace
$$

$$
U^\ell_j := \sup \left\lbrace V \in \PSH(X\times S,   \pi_1^*{\omega_\ell}) \; \Bigg\vert \;
\begin{cases}
\limsup_{\Rea(z) \rightarrow 0}  V(x,z) \leq u_{0,j}(x) \\
\limsup_{\Rea(z) \rightarrow 1}  V(x,z) \leq u_{1,j}(x)
\end{cases} \right\rbrace.
$$

We then have the following:

\begin{Prop}
\label{Prop:QuantizationStep2}
Fix $\ell \in \N$. The sequence $\left( U^\ell_j \right)_{j\in \N}$ decreases to $U_\ell$ uniformly in $X\times \overline{S}$,
\begin{equation}
\label{eq:QuantizationStep2}
\sup_{(x,t) \in X \times [0,1]} \left\vert u_{t,j}^\ell(x)- u_t^\ell(x) \right\vert \underset{j\rightarrow +\infty}{\longrightarrow} 0.
\end{equation}
\end{Prop}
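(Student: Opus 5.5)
The plan is to compare $U^\ell_j$ and $U^\ell$ through the maximum principle in the envelope definitions, using the fact that $\omega_\ell$ is Kähler so everything is a genuine $\omega_\ell$-psh envelope on $X\times S$.

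\emph{Monotonicity.} Since $u_{0,j}$ and $u_{1,j}$ decrease in $j$, the family of competitors defining $U^\ell_{j+1}$ is contained in the one defining $U^\ell_j$. Hence $U^\ell_{j+1}\le U^\ell_j$. Since $u_{0,j}\ge u_0$ and $u_{1,j}\ge u_1$, every competitor for $U^\ell$ is a competitor for $U^\ell_j$, so $U^\ell\le U^\ell_j$ for all $j$.

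\emph{Uniform estimate.} Set $\delta_j:=\max\bigl(\sup_X(u_{0,j}-u_0),\,\sup_X(u_{1,j}-u_1)\bigr)\ge 0$. Take any competitor $V$ for $U^\ell_j$. Then $V-\delta_j$ is still $\pi_1^*\omega_\ell$-psh, and its boundary $\limsup$ is at most $u_{i,j}-\delta_j\le u_i$ for $i=0,1$. So $V-\delta_j$ is a competitor for $U^\ell$, which gives $V-\delta_j\le U^\ell$. Taking the supremum over $V$ yields
\[
U^\ell\le U^\ell_j\le U^\ell+\delta_j \quad\text{on } X\times\overline{S}.
\]
The statement therefore reduces to showing $\delta_j\to 0$, i.e.\ that the regularizing sequences converge \emph{uniformly} to $u_0$ and $u_1$. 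Restricting to $z$ with $\Rea(z)=t$ then gives \eqref{eq:QuantizationStep2}.

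\emph{Main obstacle.} The key step is obtaining $\delta_j\to0$. By Blocki--Kolodziej the convergence $u_{i,j}\searrow u_i$ is only pointwise decreasing. Uniformity follows from Dini's theorem on the compact manifold $X$ provided $u_0,u_1$ are continuous on all of $X$. Here we only know $u_i\in\cH_\theta$, so they are bounded, $\theta$-psh (hence $\omega_\ell$-psh) and smooth on $\Amp(\theta)$, but not obviously continuous across $X\setminus\Amp(\theta)$.

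The first approach I would try is to use the fact that $\omega_\ell$ is Kähler and $u_i$ is a bounded $\omega_\ell$-psh function. If $u_i$ is continuous (an assumption that is natural here, or can be imposed on the elements of $\cH_\theta$ being used), then the regularization of \cite{blocki_regularization_2007} of a continuous quasi-psh function converges uniformly, and Dini applies directly. If continuity fails, uniform convergence on all of $X$ is false in general, and the same argument only yields uniform convergence on compact subsets of $\Amp(\theta)$ where the $u_i$ are smooth. In that case I would note this restriction explicitly, since the later quantization statements are in any case only on compacts $K\subset\Amp(\theta)$.
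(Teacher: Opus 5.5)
Your proof is correct under the hypothesis you make explicit, and it takes a different route from the paper's. The monotonicity and the lower bound $U^\ell\le U^\ell_j$ are as in the paper; you phrase them via inclusion of competitor families instead of the comparison principle.

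The difference lies in the convergence step. The paper first identifies the decreasing limit $W^\ell=\lim_j U^\ell_j$ with $U^\ell$, by checking its boundary behaviour through convexity in $t$ ($u^\ell_{t,j}\le tu_{1,j}+(1-t)u_{0,j}$). It then upgrades to uniform convergence by Dini's theorem on $X\times\overline{S}$, invoking Proposition~\ref{Prop:RegularityWeakGeodesicTheta} for the continuity of every $U^\ell_j$ and of $U^\ell$ on $X\times\overline{S}$. Your constant-shift argument replaces both steps by the explicit bound $0\le U^\ell_j-U^\ell\le\delta_j$. This uses only the definition of the envelopes, requires no regularity of $U^\ell$, and gives an explicit rate in terms of the boundary data.

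It also shows that the proposition is equivalent to $\delta_j\to 0$. At $t=0,1$ the statement reads $\sup_X|u_{i,j}-u_i|\to 0$, which, the $u_{i,j}$ being smooth, forces $u_0,u_1\in C^0(X)$. So your caveat is legitimate, and it applies equally to the paper's proof. Continuity of $U^\ell$ up to $\{\Rea(z)=0,1\}$ on all of $X$ requires continuous boundary data, and this is not part of the definition of $\cH_\theta$. It does hold in the Sasaki application, where the endpoints are pull-backs by $\rho$ of smooth orbifold functions.

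One correction to your fallback remark. If $u_0$ or $u_1$ is discontinuous somewhere on $X\setminus\Amp(\theta)$, the shift argument yields no convergence at all, even on compact subsets of $\Amp(\theta)$. The reason is that $\delta_j$ is a supremum over all of $X$ and the envelopes are global objects.

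Local uniform convergence on $K\times\overline{S}$, for compact $K\subset\Amp(\theta)$, would instead come from the paper's route. One identifies the limit pointwise, then applies Dini on $K\times\overline{S}$ using continuity of $U^\ell$ on $\Amp(\theta)\times\overline{S}$, as the paper does for $U^\varepsilon$ in the preceding proposition. That weaker statement would still suffice for Theorem~\ref{Th:QuantizationGeodesicsSemiPositive}, by choosing $j(\ell)$ along an exhaustion of $\Amp(\theta)$ by compact sets.
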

In particular we can choose $j=j(\ell)$ such that 
\begin{equation}
\label{eq j conv}
\sup_{(x,t) \in X \times [0,1]} \left\vert u_{t,j}^\ell(x)- u_t^\ell(x) \right\vert \leq \frac{1}{\ell}.
\end{equation}
\smallskip

\noindent The proof of such a result is somehow classical. We give details for reader's convenience.
\begin{proof}
It follows from the comparison principle that the sequence $\left(U_j^\ell\right)_{j\in\N}$ is decreasing. Let us show that this sequence converges towards $U^\ell$. First, since $U^\ell$ is a candidate in the supremum defining $U_j^\ell$ (Proposition \ref{Prop:RegularityWeakGeodesicTheta}), we have
$
U^\ell \leq  U_j^\ell,
$
for any $j$. Passing to the limit for $j\rightarrow +\infty$ we get $U^\ell\leq W^\ell:=\lim_{j\rightarrow \infty} U_j^\ell $. In particular $W^\ell$ is a genuine $\pi_1^*\omega_\ell$-psh function on $M$. Hence to infer that 
$W^\ell\geq U^\ell$, we only need to check that $W^\ell$ has the right boundary conditions.
Since $t\mapsto u_{t,j}^\ell$ is convex and $t\mapsto t u_{1,j} + (1-t) u_{0,j}$ is affine with same endpoints we have that for all $t \in [0,1]$,
$$
\quad u_{t,j}^\ell \leq t u_{1,j} + (1-t) u_{0,j}.
$$
Letting $j\rightarrow +\infty$ and taking $t=0,1$ give:
$$
\lim_{j\rightarrow \infty} u_{0,j}^\ell \leq u_0 \; ; \qquad \lim_{j\rightarrow \infty} u_{1,j}^\ell \leq u_1.
$$
In other words:
$$
\lim_{j\rightarrow \infty} U_j^\ell \vert_{\left\lbrace\Rea(z) = 0\right\rbrace} \leq u_0 \; ; \qquad
\lim_{j\rightarrow \infty} U_j^\ell \vert_{\left\lbrace\Rea(z) = 1\right\rbrace} \leq u_1.
$$
This shows that $W^\ell$ is candidate for the $\sup$ defining~$U^\ell$. Moreover, the convergence is uniform thanks to Dini's Theorem: indeed, $\left( U_j^\ell \right)_{j\in\N}$ is a decreasing sequence and, since $\omega_\ell$ is Kähler, Proposition \ref{Prop:RegularityWeakGeodesicTheta} asserts that all the functions $U_j^\ell$ and $U^\ell$ are continuous on $X \times \overline{S}$.
\end{proof}

\begin{Th}
\label{Th:QuantizationGeodesicsSemiPositive}
Let $u_0, u_1 \in \cH_\theta$ and $u_t$ be the weak geodesic joining them. There exists a sequence of path $(\hat{u}_{t,\ell})_{\ell\in\N}$ in the image of $\frac{1}{\ell}\FS_k$, where $k$ depends on $\ell$, such that for any compact $K\subset \Amp(\theta)$, we have:
$$
\sup_{(x,t) \in K\times [0,1]} \left\vert \hat{u}_{t,\ell}(x) - u_t(x)\right\vert  \rightarrow_{\ell \rightarrow \infty} 0.
$$
\end{Th}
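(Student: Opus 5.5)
We argue by a three-step diagonal approximation, chaining \eqref{eq:QuantizationStep1}, Proposition~\ref{Prop:QuantizationStep2} and Berndtsson's Theorem~\ref{Th:GeometricQuantizationKahler}.

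\textbf{Setup.} Since $\theta$ and $\omega$ are integral, for each $\ell\in\N$ the form $\ell\,\omega_{1/\ell}=\ell\theta+\omega$ is an integral Kähler form. Hence there is a positive line bundle $L_\ell$, with a Hermitian metric $h_\ell$, whose curvature is $\ell\theta+\omega$.

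\textbf{Step 1: reduce to a smooth Kähler problem.} By \eqref{eq:QuantizationStep1} with $\varepsilon=1/\ell$, $u^\ell_t\to u_t$ uniformly on $K\times[0,1]$ for every compact $K\subset\Amp(\theta)$. By Proposition~\ref{Prop:QuantizationStep2} and the choice \eqref{eq j conv}, there is $j=j(\ell)$ with $\sup_{X\times[0,1]}|u^\ell_{t,j}-u^\ell_t|\le 1/\ell$.

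The endpoints $u_{0,j},u_{1,j}$ are smooth and strictly $\omega_{1/\ell}$-psh. Therefore $\ell u_{0,j},\ell u_{1,j}$ lie in $\cH_{\ell\omega_{1/\ell}}$. Moreover, $t\mapsto \ell u^\ell_{t,j}$ is the weak geodesic joining them, because the defining envelope scales linearly: $V$ is $\pi_1^*\omega_{1/\ell}$-psh if and only if $\ell V$ is $\pi_1^*(\ell\omega_{1/\ell})$-psh.

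\textbf{Step 2: quantize.} Apply Theorem~\ref{Th:GeometricQuantizationKahler} on the Kähler manifold $(X,\ell\omega_{1/\ell})$ with the bundle $L_\ell$. Let $H_\ell(t)$ be the geodesic in $\hat\cH(L_\ell^k)$ between $\Hilb_k(\ell u_{0,j})$ and $\Hilb_k(\ell u_{1,j})$. Berndtsson's estimate gives
\[
\sup_{X\times[0,1]}\bigl|\FS_k(H_\ell(t))-\ell u^\ell_{t,j}\bigr|\le C_\ell\frac{\log k}{k}.
\]
The constant $C_\ell$ depends on $\ell$ (through the geometry of $\ell\omega_{1/\ell}$ and the endpoints $u_{0,j(\ell)},u_{1,j(\ell)}$), but is independent of $k$. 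Choose $k=k(\ell)$ so large that $C_\ell\log k/k\le 1$.

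Set $\hat u_{t,\ell}:=\frac1\ell\FS_k(H_\ell(t))$. Then
\[
\sup_{X\times[0,1]}|\hat u_{t,\ell}-u^\ell_{t,j}|\le 1/\ell.
\]
Here $\FS_k$ is computed with respect to the reference metric $h_\ell$, whose curvature is $\ell\theta+\omega$. So $\hat u_{t,\ell}$ is indeed a path in the image of $\frac1\ell\FS_k$, viewed as a $\theta+\frac1\ell\omega$-potential.

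\textbf{Step 3: combine.} By the triangle inequality, on $K\times[0,1]$,
\[
|\hat u_{t,\ell}-u_t|\le \frac{2}{\ell}+\sup_{K\times[0,1]}|u^\ell_t-u_t|.
\]
The right-hand side tends to $0$ as $\ell\to\infty$ by \eqref{eq:QuantizationStep1}, which proves the theorem.

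\textbf{Main obstacle.} The delicate point is the bookkeeping in Step 2. First, the constant in Berndtsson's theorem is not uniform in $\ell$, so $k$ must be chosen after $\ell$ and $j(\ell)$ are fixed; this forces the diagonal order $\ell\to j(\ell)\to k(\ell)$. Second, one must make sure that Theorem~\ref{Th:GeometricQuantizationKahler} applies to the rescaled form $\ell\omega_{1/\ell}$ with the modified $\Hilb_k$ volume form, which we can justify by the remark after that theorem. Third, one must check that the normalizations ($\frac1k$ in $\FS_k$ versus the extra factor $\frac1\ell$) match, so that $\hat u_{t,\ell}$ is genuinely a $\theta+\frac1\ell\omega$-potential.
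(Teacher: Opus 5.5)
Your proposal is correct and follows essentially the same route as the paper's proof. Both take a line bundle with curvature $\ell\theta+\omega$, fix $j(\ell)$ via Proposition~\ref{Prop:QuantizationStep2}, then pick $k(\ell)$ from Berndtsson's estimate so the error is at most $1/\ell$, and conclude by the triangle inequality together with \eqref{eq:QuantizationStep1}; your scaling remark, that $\ell u^\ell_{t,j}$ is the weak geodesic for $\ell\theta+\omega$, simply makes explicit a step the paper uses implicitly.
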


\begin{proof}
We apply the quantization process to $t\mapsto u_{t,j}^\ell$, with $L$ being an ample line bundle whose curvature is $\ell \theta+\omega$. Hence
\begin{equation}
\label{eq:SchemaHilbFS}
\begin{tikzcd}
\Hspace_{\ell\theta + \omega}
\arrow[r, yshift=0.7ex, "\Hilb_k"]
& \hat{\cH}(L^k)
\arrow[l, yshift=-0.7ex, "\FS_k"].
\end{tikzcd}
\end{equation}
By construction
$\ell u_{0,j}^\ell, \ell u_{1,j}^\ell \in  \cH_{\ell\theta + \omega}$. 
We then consider the geodesic $ [0,1] \ni t\mapsto H_t^{k,j,\ell} \in \hat{\cH}(L^k)$ that joins $\Hilb_k(\ell u_{0,j}^\ell)$ to $\Hilb_k(\ell u_{1,j}^\ell)$. Theorem \ref{Th:GeometricQuantizationKahler} ensures the existence of a constant $C_{\ell,j}$ such that

\begin{equation}
\label{eq:QuantizationStep3BIS}
\sup_{(x,t) \in X\times [0,1]} \abs{ \frac{1}{\ell}\FS_k\left(H^{k,j,\ell}_t\right)(x) - u_{t,j}^\ell(x) } \leq \frac{C_{\ell,j}}{\ell} \frac{\log k}{k}.
\end{equation}
We now choose $j=j(\ell)$ so that \eqref{eq j conv} holds, and since, for $\ell\in \mathbb{N}$ fixed, $
\lim_{k\rightarrow \infty}\frac{C_{\ell,j(\ell)}}{\ell} \frac{\log k}{k} = 0,
$
we can choose $k=k(\ell)$ so that:
\begin{equation}
\label{eq:ChoiceOf_k}
\sup_{(x,t) \in X\times [0,1]} \left|\frac{1}{\ell} \FS_{k(\ell)} \left(H^{k(\ell),j(\ell),\ell}_t\right)(x) - u_{t,j(\ell)}^{\ell}(x) \right| \leq \frac{1}{\ell}
\end{equation}
We claim that, choosing $j$ and $k$ as above, 
$\hat{u}_{t,\ell} := \frac{1}{\ell}\FS_{k}\left(H^{k,j,\ell}_t\right)
$ is a \emph{quantized path} for $u_t$. Indeed for any $K$ compact subset of $\Amp(\theta )$, 
\begin{align}
\sup_{(x,t) \in K\times [0,1]} \left\vert \hat{u}_{t,\ell}(x) - u_t(x)\right\vert
&\leq \sup_{(x,t) \in K\times [0,1]} \abs{ \frac{1}{\ell}\FS_k\left(H^{k,j,\ell}_t\right)(x) - u_{t,j}^\ell(x) } \\
& \qquad + \sup_{(x,t) \in K\times [0,1]} \left\vert u_{t,j}^\ell(x)- u_t^\ell(x) \right\vert \\
& \qquad + \sup_{(x,t) \in K\times [0,1]} \abs{ u_t^\ell (x) - u_t(x)} \\[1em]
&\leq \frac{1}{\ell} + \frac{1}{\ell} + \sup_{(x,t) \in K\times [0,1]} \abs{ u_t^\ell (x) - u_t(x)}.
\label{eq:CalculFinalGeodesicApproximationSemiPositif}
\end{align}
The conclusion follows from \eqref{eq:QuantizationStep1}.
\definecolor{ttzzqq}{rgb}{0.2,0.6,0}
\definecolor{qqqqff}{rgb}{0,0,1}

\begin{figure}[h]
\centering
\begin{minipage}{0.4\textwidth}
\begin{figure}[H]
\scalebox{0.65}{
\centering
\begin{tikzpicture}% Figure  approximation DiN-G
\draw [shift={(0,-0.44)}] plot[domain=3.62:5.8,variable=\t]({1*3.38*cos(\t r)+0*3.38*sin(\t r)},{0*3.38*cos(\t r)+1*3.38*sin(\t r)});
\draw [shift={(0,2)},color=ttzzqq]  plot[domain=4.07:5.36,variable=\t]({1*5*cos(\t r)+0*5*sin(\t r)},{0*5*cos(\t r)+1*5*sin(\t r)});
\draw [shift={(0,5)},color=ttzzqq]  plot[domain=4.31:5.12,variable=\t]({1*7.62*cos(\t r)+0*7.62*sin(\t r)},{0*7.62*cos(\t r)+1*7.62*sin(\t r)});
\draw [->, color=ttzzqq] (0,-3.15) -- (0,-3.65);
\draw [dotted, color=ttzzqq] (0,-2.7) -- (0,-3);
\draw (-3,-2) node[left] {$u_0$};
\draw (3,-2) node[right] {$u_1$};
\draw (0.5,-4) node[right] {$u_t$};
\draw (0.5,-2.2) node[right, color=ttzzqq] {$u_t^\ell$};
\end{tikzpicture}
}
\caption*{Step 1}
\end{figure}

\begin{figure}[H]
\scalebox{0.65}{
\centering
\begin{tikzpicture}%Figure Demailly approx at level n
\draw [shift={(0,-0.44)}] plot[domain=3.62:5.8,variable=\t]({1*3.38*cos(\t r)+0*3.38*sin(\t r)},{0*3.38*cos(\t r)+1*3.38*sin(\t r)});
\draw [shift={(0,2)},color=ttzzqq]  plot[domain=4.07:5.36,variable=\t]({1*5*cos(\t r)+0*5*sin(\t r)},{0*5*cos(\t r)+1*5*sin(\t r)});
\draw [shift={(0,2.5)},color=qqqqff]  plot[domain=4.07:5.36,variable=\t]({1*5*cos(\t r)+0*5*sin(\t r)},{0*5*cos(\t r)+1*5*sin(\t r)});
\draw [shift={(0,4)},color=qqqqff]  plot[domain=4.07:5.36,variable=\t]({1*5*cos(\t r)+0*5*sin(\t r)},{0*5*cos(\t r)+1*5*sin(\t r)});
\draw [->, color=qqqqff] (0,-2.64) -- (0,-2.9);
\draw [dotted, color=qqqqff] (0,-1.5) -- (0,-2);
\draw [dotted] (-3.4,-0.5) -- (-3.4,-1.5);
\draw [dotted] (3.4,-0.5) -- (3.4,-1.5);
\draw (-3,-2) node[left] {$u_0$};
\draw (-3,0) node[left] {$u_{0,j}$};
\draw (3,-2) node[right] {$u_1$};
\draw (3,0) node[right] {$u_{1,j}$};
\draw (0.5,-4) node[right] {$u_t$};
\draw (0.5,-3.2) node[right, color=ttzzqq] {$u_t^{\ell}$};
\draw (0.5,-0.5) node[right, color=qqqqff] {$u_{t,j}$};
\end{tikzpicture}
}
\caption*{Step 2}
\end{figure}
\end{minipage}
\hspace{0.02\textwidth}
\begin{minipage}{0.4\textwidth}
\begin{figure}[H]
\scalebox{0.65}{
\centering
\begin{tikzpicture}%Quantization
\draw [shift={(0,-0.44)}] plot[domain=3.62:5.8,variable=\t]({1*3.38*cos(\t r)+0*3.38*sin(\t r)},{0*3.38*cos(\t r)+1*3.38*sin(\t r)});
\draw [shift={(0,2)},color=ttzzqq]  plot[domain=4.07:5.36,variable=\t]({1*5*cos(\t r)+0*5*sin(\t r)},{0*5*cos(\t r)+1*5*sin(\t r)});

\draw [shift={(0,1.59)},color=red]  plot[domain=3.94:5.48,variable=\t]({1*4.31*cos(\t r)+0*4.31*sin(\t r)},{0*4.31*cos(\t r)+1*4.31*sin(\t r)});
\draw [shift={(0,3.5)},color=red]  plot[domain=3.61:5.82,variable=\t]({1*3.35*cos(\t r)+0*3.35*sin(\t r)},{0*3.35*cos(\t r)+1*3.35*sin(\t r)});

\draw [shift={(0,2.5)},color=qqqqff]  plot[domain=4.07:5.36,variable=\t]({1*5*cos(\t r)+0*5*sin(\t r)},{0*5*cos(\t r)+1*5*sin(\t r)});
\draw [shift={(0,4)},color=qqqqff]  plot[domain=4.07:5.36,variable=\t]({1*5*cos(\t r)+0*5*sin(\t r)},{0*5*cos(\t r)+1*5*sin(\t r)});

\draw [shift={(0,6)},color=qqqqff]  plot[domain=4.07:5.36,variable=\t]({1*5*cos(\t r)+0*5*sin(\t r)},{0*5*cos(\t r)+1*5*sin(\t r)});
\draw (-3,-2) node[left] {$u_0$};
\draw (-3,0) node[left] {$u_{0,j}$};
\draw (3,-2) node[right] {$u_1$};
\draw (3,0) node[right] {$u_{1,j}$};
\draw (-3,-1.4) node[left] {$u_{0,j(\ell+1)}$};
\draw (3,-1.4) node[right] {$u_{1,j(\ell+1)}$};
\draw (-3,2) node[left] {$u_{0,j(\ell)}$};
\draw (3,2) node[right] {$u_{1,j(\ell)}$};
\draw (0.5,-4) node[right] {$u_t$};
\draw (-1,-0.7) node[right, color=qqqqff] {$u_{t,j}$};
\draw (-1,1.3) node[right, color=qqqqff] {$u_{t,j(\ell)}$};
\draw (0.5,0.6) node[right, color=red] {$\hat{u}_{t,\ell}$};
\draw (0.5,-2) node[right, color=red] {$\hat{u}_{t,\ell+1}$};
\draw [->, color = red] (0,-2.8) -- (0,-3.5);
\end{tikzpicture}
}
\caption*{$\qquad\quad$ Step 3}
\end{figure}
\end{minipage}
\caption{Quantization Process for Geodesics}
\label{Fig:Quantization Process}
\end{figure}
\end{proof}

\smallskip

\section{Quantization in Sasaki geometry}
\label{sec:QuantizationSasaki}

In this section, we explain how to adapt the quantization of geodesics done in Section \ref{Sec:PotentialsWithSemiPositiveForm} to the Sasaki setting. Let $(M,\mathcal{S})$ be a Sasaki manifold. We assume $(M,\mathcal{S})$ to be regular or quasi-regular.

If $(M,\mathcal{S})$ is a regular Sasaki manifold over the Kähler manifold $(X,\omega)$, the space of potentials $\cH(M,\mathcal{S})$ is homothetic to the space of Kähler potentials $\cH_\omega$ (see \cite{franzinetti2021}). Thus, the quantization of $\cH(M,\mathcal{S})$ reduces to the quantization of $\cH_\omega$.

\medskip

We then consider the case $(M,\mathcal{S})$ to be a quasi-regular Sasaki manifold over the Kähler orbifold $(X,\omega)$. Let $\pi : M \rightarrow X$ be the quotient map and denote by $M^{\text{reg}} := \pi^{-1} (X^{\text{reg}}),$ the set of points in $M$ whose orbit under the Reeb vector field is regular. By \cite[Theorem 7.1.3 (iii)]{boyer_sasakian_2007} we can assume $\omega$ to be an integral $2$-form. 
We consider a resolution of singularities $\tilde{X}$ of the Kähler orbifold $(X,\omega)$:
\begin{equation}
\begin{tikzcd}
 & \tilde{X} \ar[d, "\rho"] \\
M \ar[r, "\pi"] &  X.
\end{tikzcd}
\end{equation}
Here $\rho$ is a holomorphic map which induces a biholomorphism between $\rho^{-1}(X^{\text{reg}})$ and $X^{reg}$. By construction $\tilde{X}$ is a K\"ahler manifold endowed with a Kähler form $\tilde{\omega}$, while $\theta := \rho^*\omega$ is a real closed $(1,1)$-form which is big and semi-positive. Observe that, since $X$ is projective, both $\tilde{\omega}$ and $\theta$ are integral forms. The above diagram induces:
\begin{equation}
\begin{tikzcd}
 & \cH_\theta \\
\cH(M,\mathcal{S}) \ar[r, bend right = 50 , "\mathrm{p} "] & \cH_\omega \ar[u, "\rho^*"]  \ar[l, "\mathrm{f}"'].
\end{tikzcd}
\end{equation}
Here $\mathrm{f}$ is the correspondence between the set of basic functions on $M$ and the set of smooth functions on $X$. The map $\mathrm{f}$ is a $1$-to-$1$ correspondence, and $\mathrm{p}$ is its inverse . Also, the set $\cH_\omega$ is the set of smooth functions $u$ on the orbifold $X$ such that $\omega+dd^c u > 0$ in the sense that the $2$-form $\omega+dd^c u$, computed on each local lifts of orbifold charts, is a positive $(1,1)$-form.

The quantization result in this setting states as follows:
\begin{Th}
\label{Th:QuantizGeodesicSasaki}
Let $u_0, u_1 \in \cH(M,\mathcal{S})$ and let $u_t$ be the weak geodesic joining $u_0$ and $u_1$. There exists a sequence of path $(\hat{u}_{t,\ell})_{\ell\in\N}$, which are image of geodesics in a finite dimensional space, such that for any compact set $K\subset M^{\text{reg}}$, we have the following uniform convergence:
$$
\sup_{(x,t) \in K\times [0,1]} \abs{ \hat{u}_{t,\ell}(x) - u_t(x)} \longrightarrow_{\lim_{\ell\rightarrow \infty} } 0.
$$
\end{Th}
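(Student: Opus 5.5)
The plan is to transport the problem from $M$ to the resolution $\tilde X$ and then apply Theorem \ref{Th:QuantizationGeodesicsSemiPositive} with $\theta=\rho^*\omega$.

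\textbf{Step 1: push the endpoints to $\cH_\theta$.} Given $u_0,u_1\in\cH(M,\mathcal{S})$, we set $v_i:=\mathrm{p}(u_i)\in\cH_\omega$ and $w_i:=\rho^*v_i$. Since $v_i$ is smooth on the orbifold and $\omega+dd^c v_i>0$, the function $w_i$ is bounded, smooth and $\theta$-psh. It satisfies $\theta+dd^c w_i=\rho^*(\omega+dd^c v_i)$, which is strictly positive wherever $\rho$ is a local biholomorphism. In particular this holds on $\rho^{-1}(X^{\text{reg}})$. We also need $\Amp(\theta)\supseteq \rho^{-1}(X^{\text{reg}})$, or at least that every compact set of $\rho^{-1}(X^{\text{reg}})$ lies in $\Amp(\theta)$. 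The idea is that $\omega$ is an integral orbifold Kähler form, so $X$ is projective by Baily/Kodaira. Sections of an orbifold ample bundle pulled back by $\rho$ then give $\theta$-psh functions with analytic singularities, and these are smooth and strictly positive away from $\rho^{-1}(X^{\text{sing}})$. If the positivity of $\theta_{w_i}$ fails on $\Amp(\theta)\setminus\rho^{-1}(X^{\text{reg}})$, we would instead work only with compacts in $\rho^{-1}(X^{\text{reg}})$. That is all the statement requires, and the proof of Theorem \ref{Th:QuantizationGeodesicsSemiPositive} only uses continuity of $U$ on compacts where Proposition \ref{Prop:RegularityWeakGeodesicTheta} applies.

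\textbf{Step 2: identify the geodesics.} We must show that if $u_t$ is the weak Sasaki geodesic, then $\rho^*\mathrm{p}(u_t)$ is the weak $\theta$-geodesic $w_t$ joining $w_0,w_1$. Via $\mathrm{p}$, the Sasaki geodesic equation of Guan--Zhang, a transverse Monge--Ampère equation on basic functions, becomes the orbifold geodesic equation $(\pi_1^*\omega+dd^c V)^{n+1}=0$ on $X\times S$. Its solution is the orbifold envelope $V$. Pulling back, $\rho^*V$ is a candidate in \eqref{eq:DefGeodesicSupTheta}, so $\rho^*V\le U$. Conversely, $U$ is $\pi_1^*\theta$-psh and bounded, and $\theta=\rho^*\omega$ vanishes along the fibres of $\rho$, so $U$ is constant on the compact connected fibres of $\rho$ and descends to a bounded $\pi_1^*\omega$-psh function on $(X\setminus X^{\text{sing}})\times S$. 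It extends across the analytic set $X^{\text{sing}}\times S$ as a psh function by Riemann extension for bounded psh functions. It is therefore a candidate for $V$, which gives $U\le\rho^*V$. Alternatively, one can invoke the uniqueness in Proposition \ref{Prop:RegularityWeakGeodesicTheta}, since $\rho^*V$ is bounded, $\theta$-psh and solves \eqref{eq:MA-equation-Geodesic-theta}.

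\textbf{Step 3: conclude.} Theorem \ref{Th:QuantizationGeodesicsSemiPositive} applies because $\theta$ and $\tilde\omega$ are integral. It gives paths $\hat w_{t,\ell}=\frac1\ell\FS_k(H_t)$, the images of geodesics in $\hat\cH(L^k)$ with $L$ of curvature $\ell\theta+\tilde\omega$. These converge uniformly to $w_t$ on compacts of $\Amp(\theta)$. We set $\hat u_{t,\ell}:=\mathrm{f}\big((\rho^{-1})^*\hat w_{t,\ell}\big)$, which is well defined on $M^{\text{reg}}$. For $K\subset M^{\text{reg}}$ compact, $\rho^{-1}(\pi(K))$ is compact in $\rho^{-1}(X^{\text{reg}})$. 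Since $\mathrm{f}$ and $\rho$ preserve sup norms, the required convergence follows.

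The main obstacle is Step 2, namely the identification of the Sasaki weak geodesic with the pullback of the $\theta$-envelope. It requires passing the envelope through the orbifold charts and the resolution. I would first try the uniqueness argument, and fall back on the descent/extension argument if needed.
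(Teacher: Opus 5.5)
Your proposal follows the paper's proof: pull the endpoints back to $\tilde X$ by $\rho^*\circ\mathrm{p}$, identify $\rho^*\mathrm{p}(u_t)$ with the weak $\theta$-geodesic, apply Theorem~\ref{Th:QuantizationGeodesicsSemiPositive}, and transport back by $\rho$ over $X^{\text{reg}}$ and then by $\mathrm{f}$. The paper only asserts the identification of geodesics, and your envelope/descent argument (or the uniqueness in Proposition~\ref{Prop:RegularityWeakGeodesicTheta}) is a valid way to fill it in. One caveat concerns your sketch of $\rho^{-1}(X^{\text{reg}})\subseteq\Amp(\theta)$, which the paper takes for granted as $\Amp(\theta)=\rho^{-1}(X^{\text{reg}})$: pulled-back sections only give smooth semi-positive forms that degenerate along the exceptional locus, not Kähler currents, so they do not place points in $\Amp(\theta)$; instead use $\theta+\delta\,dd^c\log|s_E|_h^2$ for an effective exceptional divisor $E$ with $-E$ relatively ample and $\delta>0$ small, or the fact that the non-Kähler locus of $\rho^*\{\omega\}$ is the exceptional locus of $\rho$.
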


\begin{proof}[Proof of Theorem \ref{Th:QuantizGeodesicSasaki}]
We start observing that
$
v_t:=\rho^* \left( \mathrm{p} (u_t) \right)
$
is the weak geodesic in $\cH_\theta$ joining $ \rho^* \left( \mathrm{p} (u_0)\right)$ and $ \rho^* \left( \mathrm{p} (u_1)\right)$.
We then apply the quantization process (Theorem \ref{Th:QuantizationGeodesicsSemiPositive}) to $v_t$. This gives a sequence of paths defined on $\tilde{X}$:
$$
t \mapsto \hat{v}_{t,\ell} \; \in \cH_{\theta + \frac{1}{\ell}\tilde{\omega}}, \qquad \ell\in\N.
$$
Moreover, the sequence $\hat{v}_{t,\ell}$ converges uniformly (as $\ell\rightarrow +\infty$) to $v_t $ on each compact set $K\subset \Amp(\theta)=\rho^{-1}(X^{\text{reg}})$.
Since $\rho$ is a biholomorphism on $\rho^{-1}(X^{\text{reg}})$, we infer that on any compact subset of $X^{\text{reg}}$, the following uniform convergence holds
$$
\rho_* \, \hat{v}_{t,\ell} \; \longrightarrow v_t, \quad {\rm as}\;\, \ell\rightarrow +\infty.
$$
At the level of the Sasaki manifold $(M,\mathcal{S})$, the above ensures uniform convergence $$
\mathrm{f} (\rho_* \, \hat{v}_{t,\ell}) \longrightarrow u_t
$$ on any compact subset of $M^{\text{reg}}$.
This ends the proof with 
$
\hat{u}_{t,\ell} := \mathrm{f} (\rho_* \, \hat{v}_{t,\ell}).
$
\end{proof}

\begin{footnotesize}
\addcontentsline{toc}{section}{Bibliography}
\bibliographystyle{abbrv}
\bibliography{biblio}
\end{footnotesize}
\end{document}